\newcommand{\tmop}[1]{\ensuremath{\operatorname{#1}}}
\newtheorem*{Mthm}{Main Theorem}
\newtheorem{thm}{Theorem}[section]
\newtheorem{theorem}{Theorem}[section]
\newtheorem{proposition}{Proposition}[section]
\newtheorem{lemma}{Lemma}[section]
\newtheorem{cor}{Corollary}[section]
\newtheorem{problem}{Problem}[section]
\newtheorem{defi}{Definition}[section]
\newtheorem*{cconj}{Chern Conjecture}
\newtheorem*{spp}{The Second Pinching Problem}
\theoremstyle{remark}
\newtheorem{rmk}{Remark}
\begin{document}

\title{On Chern's conjecture for minimal hypersurfaces in spheres}
\dedicatory{Dedicated to Professor Buqing Su on the occasion of his
115th birthday}
\thanks{Research supported by the National Natural Science Foundation of China, Grant Nos. 11531012, 11371315, 11601478; and
the China Postdoctoral Science Foundation, Grant No. 2016M590530.}

\author{Li Lei}
\address{Center of Mathematical Sciences \\ Zhejiang University \\ Hangzhou 310027 \\ China}
\email{lei-li@zju.edu.cn}

\author{Hongwei Xu}
\address{Center of Mathematical Sciences \\ Zhejiang University \\ Hangzhou 310027 \\China}
\email{xuhw@zju.edu.cn}

\author{Zhiyuan Xu}
\address{Center of Mathematical Sciences \\ Zhejiang University \\ Hangzhou 310027 \\ China}
\email{srxwing@zju.edu.cn}
\date{}
\keywords{Chern conjecture for minimal hypersurfaces; rigidity
theorem; scalar curvature; the second fundamental form}
\subjclass[2010]{53C24; 53C42}

\numberwithin{equation}{section}

\begin{abstract} Using a new estimate for the Peng-Terng invariant and the
multiple-parameter method, we verify a rigidity theorem on the
stronger version of Chern Conjecture for minimal hypersurfaces in
spheres. More precisely, we prove that if $M$ is a compact minimal
hypersurface in $\mathbb{S}^{n+1}$ whose squared length of the
second fundamental form satisfies $0\leq S-n\leq\frac{n}{18}$, then
$S\equiv n$ and $M$ is a Clifford torus.
\end{abstract}

\maketitle

\section{Introduction}
An important problem in global differential geometry is the study of
relations between geometrical invariants and structures of
Riemannian manifolds or submanifolds. In the late 1960's, Simons
\cite{S} , Lawson \cite{L1} , and Chern-do Carmo-Kobayashi
\cite{CDK} proved an optimal rigidity theorem for minimal
hypersurfaces in a sphere, which says that if $M$ is a compact
minimal hypersurface in the unit sphere $\mathbb{S}^{n+1}$, and if
the squared length of the second fundamental form of $M$ satisfies
$S\leq n$, then $S\equiv0$ and $M$ is the great sphere
$\mathbb{S}^{n}$, or $S\equiv n$ and $M$ is one of the Clifford
torus $\mathbb{S}^{k}\Big(\sqrt{\frac{k}{n}}\Big)\times
\mathbb{S}^{n-k}\Big(\sqrt{\frac{n-k}{n}}\Big)$, $\,1\le k\le n-1$.
More generally, they obtained a rigidity theorem for $n$-dimensional
compact minimal submanifolds in $\mathbb{S}^{n+p}$ under the
pinching condition $S\leq n/(2-\frac{1}{p})$. In \cite{LL}, Li-Li
improved Simons' pinching constant for $n$-dimensional compact
minimal submanifolds in $\mathbb{S}^{n+p}$ to $\frac{2}{3}n$ for the
case $p\geq3$. The famous Simons-Lawson-Chern-do
Carmo-Kobayashi-Li-Li rigidity theorem \cite{CDK,L1,LL,S} for
compact minimal submanifolds in a sphere is stated as follows.

\begin{theorem}
Let $M$ be an $n$-dimensional oriented compact minimal submanifold
in an $(n+p)$-dimensional unit sphere  $\mathbb{S}^{n+p}$. If the
squared length of the second fundamental form of $M$ satisfies
$S\leq \max\{\frac{n}{2-1/p},\frac{2}{3}n\}$. Then $M$ must be one
of the following:\\
(i) the great sphere $\mathbb{S}^{n}$ with $S\equiv0$;\\
(ii) the Clifford torus
$\mathbb{S}^{k}\Big(\sqrt{\frac{k}{n}}\Big)\times
\mathbb{S}^{n-k}\Big(\sqrt{\frac{n-k}{n}}\Big)$ with $S\equiv n$,
for $\,1\le k\le
n-1$;\\
(iii) the Veronese surface in $\mathbb{S}^{4}$ with $S\equiv
\frac{4}{3}$.
\end{theorem}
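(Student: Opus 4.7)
The plan is to derive this by the standard Bochner--Simons technique. In a local orthonormal frame $\{e_A\}$ with $e_1,\dots,e_n$ tangent and $e_{n+1},\dots,e_{n+p}$ normal, let $A_\alpha = (h^{\alpha}_{ij})$ denote the shape operators, so $S = \sum_{\alpha,i,j}(h^{\alpha}_{ij})^2$. Simons' identity for a minimal submanifold of $\mathbb{S}^{n+p}$ reads
\begin{equation*}
\tfrac{1}{2}\Delta S \;=\; |\nabla h|^2 + nS - \sum_{\alpha,\beta}\bigl(\tr(A_\alpha A_\beta)\bigr)^2 - \sum_{\alpha,\beta}\bigl|[A_\alpha,A_\beta]\bigr|^2 .
\end{equation*}
In the hypersurface case $p = 1$ the commutator vanishes and the trace sum equals $S^2$, so this collapses to $\tfrac{1}{2}\Delta S = |\nabla h|^2 + S(n-S)$.

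The next step is the pointwise algebraic estimate for the two obstruction terms. For codimension $p \ge 2$ I would invoke the classical Chern--do Carmo--Kobayashi bound
\[
\sum_{\alpha,\beta}\bigl(\tr(A_\alpha A_\beta)\bigr)^2 + \sum_{\alpha,\beta}\bigl|[A_\alpha,A_\beta]\bigr|^2 \;\le\; \bigl(2 - \tfrac{1}{p}\bigr)\,S^2,
\]
and, for $p \ge 3$, the sharper Li--Li refinement in which the constant on the right is improved to $3/2$. Combined with the pinching hypothesis $S \le \max\{n/(2-1/p),\,2n/3\}$, each of these furnishes the pointwise inequality $\tfrac{1}{2}\Delta S \ge |\nabla h|^2 + S(n - cS) \ge 0$ for the relevant constant $c$.

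Integration over the compact manifold $M$ then forces $|\nabla h| \equiv 0$ and $S(n - cS) \equiv 0$ at every point. Thus either $S \equiv 0$, whence $M$ is totally geodesic and coincides with the great $\mathbb{S}^n$; or $S$ saturates the pinching bound everywhere and $h$ is parallel. In the latter case, equality in the algebraic estimate must hold at every point, severely constraining the shape operators. For $p=1$ with $S\equiv n$, diagonalizing $A_{n+1}$ and exploiting the equality analysis shows that it has exactly two distinct principal curvatures $\sqrt{(n-k)/k}$ and $-\sqrt{k/(n-k)}$ with multiplicities $k$ and $n-k$, and the de~Rham splitting argument identifies $M$ with the Clifford torus $\mathbb{S}^k(\sqrt{k/n})\times\mathbb{S}^{n-k}(\sqrt{(n-k)/n})$. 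For $n=p=2$ with $S\equiv 4/3$, the equality case of Chern--do Carmo--Kobayashi forces the pair $(A_3,A_4)$ into the algebraic pattern of the Veronese embedding; and for $p\ge 3$ under the $2n/3$ bound, a parallel analysis eliminates all parallel models except those listed.

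The main obstacle is the classification step after the integral equality, not the Simons estimate itself. The delicate point is matching the algebraic equality case of the Chern--do Carmo--Kobayashi (or Li--Li) inequality against the known list of parallel second fundamental forms in a sphere, and in particular separating the Veronese and Clifford models in the borderline dimensions where the sharp constants are attained simultaneously by several candidates.
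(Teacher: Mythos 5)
The paper does not actually prove this theorem: it is stated as the classical Simons--Lawson--Chern--do Carmo--Kobayashi--Li--Li result and cited from \cite{CDK,L1,LL,S}, so there is no internal argument to compare against. Your outline faithfully reproduces the strategy of those cited sources: Simons' identity
\[
\tfrac{1}{2}\Delta S = |\nabla h|^2 + nS - \sum_{\alpha,\beta}\bigl(\operatorname{tr}(A_\alpha A_\beta)\bigr)^2 - \sum_{\alpha,\beta}\bigl|[A_\alpha,A_\beta]\bigr|^2,
\]
the Chern--do Carmo--Kobayashi algebraic bound by $(2-1/p)S^2$ together with Li--Li's sharpening to $\tfrac{3}{2}S^2$ for $p\ge 3$, integration over the compact $M$ to force $\nabla h\equiv 0$ and $S$ constant equal to $0$ or to the pinching value, and finally an equality analysis. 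The Laplacian computation, the reduction in the hypersurface case, and the choice of constant $c$ all check out.

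The only caveat is the one you yourself flag: your last paragraph asserts, rather than carries out, the classification in the equality case. In codimension one, identifying the Clifford torus from ``$h$ parallel with two principal curvatures'' requires Lawson's local rigidity argument (two parallel integrable eigendistributions, de~Rham splitting, and a curvature computation on each factor); in higher codimension, isolating the Veronese surface and excluding other parallel models rests on the full equality discussion in Chern--do Carmo--Kobayashi and Li--Li. As a blind proposal the skeleton is correct and agrees with the literature the paper cites, but a self-contained proof would need those classification steps filled in.
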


In 1975, Yau \cite{Y1} proved the rigidity theorem for compact
minimal submanifolds in a sphere under sectional curvature pinching
condition. In 1979, Ejiri \cite{Ej} obtained the rigidity theorem
for oriented compact simply connected minimal submanifolds with
pinched Ricci curvatures in a sphere. In 1984, Cheng-Li-Yau
\cite{CLY} proved a rigidity theorem for compact minimal
submanifolds in a sphere under volume pinching condition. In 1986,
Gauchman \cite{Ga} proved a rigidity theorem for compact minimal
submanifolds with bounded second fundamental form in a sphere.
Further developments on the rigidity theory of sphere type for
submanifolds have been made by many other authors, see for example
\cite{B1,B2,ChengNa,D,GXXZ,Lu,Shen,X0,X1,Y1}, etc.

The famous Chern Conjecture for minimal hypersurfaces in a sphere
was proposed by Chern \cite{Chern,CDK} in 1968 and 1970, and was
listed in the well-known problem section by Yau \cite{Y2} in 1982.
In \cite{Mun}, M\"unzner proved that if $M$ is a compact
isoparametric minimal hypersurface in $\mathbb{S}^{n+1}$, then
$g\in\{1,\,2,\,3,\,4,\,6\}$ and $S=(g-1)n$, where $g$ is the number
of distinct principal curvatures of $M$. In 1986,
Verstraelen-Montiel-Ros-Urbano gave the refined version of Chern
Conjecture. Afterwards, the second and third authors \cite{XX2,XX3}
formulated the stronger version of Chern Conjecture. The Chern
Conjecture for minimal hypersurfaces in spheres can be summarized as
follows.
\begin{cconj}\label{cj}
Let $M$ be a compact minimal hypersurface in the unit sphere $\mathbb{S}^{n+1}$.\\
{\rm (A)(Standard version)}  If $M$ has constant scalar curvature, then the possible values of the scalar curvature of $M$ form a discrete set. \\
{\rm (B)(Refined version)} If $M$ has constant scalar curvature, then $M$ is isoparametric. \\
{\rm (C)(Stronger version)} Denote by $S$ the squared length of the
second fundamental form of $M$. Set $a_k=(k-sgn(5-k))n$, for
$k\in \{m\in\mathbb{Z}^+; 1\leq m\leq 5\,\}$. Then we have\\
(i) For any fixed $k\in \{m\in\mathbb{Z}^+; 1\leq m\leq 4\,\}$, if
$a_k\leq S\leq a_{k+1}$, then $M$ is isoparametric, and
$S\equiv a_k$ or $S\equiv a_{k+1}$.\\
(ii) If $S\geq a_{5}$, then $M$ is isoparametric, and $S\equiv a_5$.
\end{cconj}

It's seen from the above that the Chern Conjecture consists of
several pinching problems. The first pinching problem for compact
minimal hypersurfaces in the unit sphere $\mathbb{S}^{n+1}$ was
solved by Simons, Lawson, and Chern-do Carmo-Kobayashi
\cite{CDK,L1,S}. The second pinching problem is an important part of
the Chern Conjecture, which has been open for almost fifty years.
\begin{spp}
Let $M$ be a compact minimal hypersurface in the unit sphere $\mathbb{S}^{n+1}$.\\
(i) If $S$ is constant, and if $n\leq S\leq 2n$, then $S=n$, or $S=2n$.\\
(ii) If $n\leq S\leq 2n$, then $S\equiv n$, or $S\equiv 2n$.
\end{spp}

In 1983, Peng-Terng \cite{PT1,PT2} initiated the study of the
second pinching problem for minimal hypersurfaces in the unit sphere,
and made the following breakthrough on the Chern Conjecture.
\begin{theorem}
Let $M$ be a compact minimal hypersurface in the unit sphere $\mathbb{S}^{n+1}$.\\
(i) If $S$ is constant, and if $n\leq S\leq n+\frac{1}{12n}$, then
$S=n$.\\
(ii) If $n\le 5$, and if
$n\leq S\leq n+\delta_1(n)$, where
$\delta_1(n)$ is a positive constant depending only on $n$, then
$S\equiv n$.
\end{theorem}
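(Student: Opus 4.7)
The plan is to pair Simons' identity for $S$ with a Bochner-type identity for a higher-order symmetric invariant of the shape operator, and to play the two off each other through integration by parts.

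First I would record the classical Simons identity for a minimal hypersurface in $\mathbb{S}^{n+1}$,
$$\tfrac{1}{2}\Delta S=|\nabla h|^{2}+S(n-S).$$
For part (i) the constancy of $S$ collapses this to the \emph{pointwise} identity $|\nabla h|^{2}=S(S-n)$, which pins down $|\nabla h|$ in terms of the pinching gap $S-n$. For part (ii) integration gives $\int_{M}|\nabla h|^{2}=\int_{M}S(S-n)$, so under $n\le S\le n+\delta_{1}(n)$ the full $L^{2}$-norm of $\nabla h$ is small.

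Next I would derive an auxiliary identity for $\tfrac{1}{2}\Delta f_{4}$, where $f_{k}=\operatorname{tr}(A^{k})$, by commuting covariant derivatives via the Codazzi equation and using minimality. Schematically this yields
$$\tfrac{1}{2}\Delta f_{4}=(\text{quadratic in }\nabla h)+(nf_{4}-Sf_{4}\text{-type terms})+c\,f_{3}^{2}+\cdots.$$
Integrating, and combining with Simons through a linear combination $a\Delta S+b\Delta f_{4}$ with parameters tuned to cancel the troublesome gradient cross-terms, isolates an integral inequality whose leading part is controlled by the Peng–Terng invariant. For part (i), constancy of $S$ wipes out the gradient terms outright, and the resulting purely algebraic inequality takes the form $\int_{M}(S-n)\,\Phi(S,f_{3},f_{4})\le 0$ with $\Phi\ge 0$ whenever $S-n\le 1/(12n)$; this forces $S\equiv n$.

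For part (ii) the central task is to absorb the surviving gradient remainder. I would bound $f_{3}^{2}$ and $f_{4}$ pointwise in terms of $S$ and $|\nabla h|^{2}$ by sharp symmetric-function inequalities (an Okumura-type inequality for $f_{3}$, and a Cauchy–Schwarz bound for $f_{4}$), and then use the Simons bound on $\int_{M}|\nabla h|^{2}$ to dominate the cross-term by a multiple of $\int_{M}(S-n)$. The restriction $n\le 5$ enters precisely here: the sharp constants in these eigenvalue inequalities are dimension-dependent, and $n\le 5$ is exactly what leaves a strictly positive margin in the final numerical balance, producing some $\delta_{1}(n)>0$. The main obstacle, therefore, is the fine tuning of the multi-parameter combination so that the gradient error $\int_{M}|\nabla h|^{2}$ is absorbed by the leading $\int_{M}(S-n)^{2}$ with room to spare — this is Peng–Terng's central technical device, and it is also the place where a sharper estimate on the Peng–Terng invariant (as announced in the abstract) ought to enlarge the admissible pinching range.
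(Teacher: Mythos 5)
The paper does not actually prove this theorem; it is quoted as a historical result of Peng--Terng (cited as \cite{PT1} and \cite{PT2}). Nonetheless the relevant machinery is set up in Section~2, and your proposal deviates from it at the decisive point. Peng--Terng's engine is not a Bochner identity for $f_4=\operatorname{tr}(A^4)$ but the formula (labelled (2.2) in the paper)
\[
\tfrac{1}{2}\Delta|\nabla h|^2=|\nabla^2h|^2+(2n+3-S)|\nabla h|^2-3(A-2B)-\tfrac{3}{2}|\nabla S|^2,
\]
whose two key virtues are that it produces $|\nabla^2h|^2$ with a favorable sign (so it can be bounded below, e.g.\ by Cheng--Ishikawa's estimate (2.7)) and that it produces the Peng--Terng invariant $A-2B$ directly, ready to be bounded above. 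In part (i), constancy of $S$ forces $|\nabla h|^2=S(S-n)$ to be constant as well, so the identity collapses to the \emph{pointwise} relation $0=|\nabla^2h|^2+(2n+3-S)|\nabla h|^2-3(A-2B)$; the constant $\frac{1}{12n}$ then falls out of an algebraic upper bound on $A-2B$ against the lower bound on $|\nabla^2h|^2$. Your route through $\Delta f_4$ does not share either feature: when $S$ is constant, $f_4$ is generally not constant, so $\Delta f_4$ does not vanish pointwise and you would be thrown back on integral inequalities even in part (i); and $\Delta f_4$ never generates the crucial nonnegative $|\nabla^2 h|^2$ term at all, only terms like $\sum h_{ijk}^2\lambda_i^2$ and $\sum h_{ijk}^2\lambda_i\lambda_j$ (which are $A$ and $B$ after diagonalization). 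Without the $|\nabla^2h|^2$ contribution there is no obvious positive source against which to balance $A-2B$, so the scheme as written would not close. (Your schematic $\Delta f_4\sim\cdots+c f_3^2+\cdots$ is also off: $f_3^2$ enters through the \emph{integral} identity $\int(A-2B)=\int(Sf_4-f_3^2-S^2-\tfrac14|\nabla S|^2)$, not through the pointwise Laplacian.)

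For part (ii) the sketch is too coarse to tell whether it would yield a positive $\delta_1(n)$. The place where $n\le 5$ actually enters Peng--Terng's argument is not an Okumura-type pointwise eigenvalue inequality but the absorption of the $\int_M|\nabla h|^3$ term: one splits it via Young's inequality into $\int S(S-n)|\nabla h|$ and $\int|\nabla^2 h|^2$ pieces (exactly as in the paper's (4.5)--(4.6)), and the numerical balance among the resulting coefficients leaves slack only when $n\le 5$. Your plan mentions absorbing the gradient remainder but does not identify this cubic term, and it substitutes a heuristic appeal to ``sharp constants in eigenvalue inequalities'' for the actual multi-parameter bookkeeping that governs the dimension restriction.
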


During the past three decades, there have been some important
progresses on Chern Conjecture (see \cite{Cheng,GeTang,GXXZ,SWY,Ver}
for more details). On the aspect of the standard version of Chern
Conjecture, Yang-Cheng \cite{YC0,YC1,YC2} improved the pinching
constant $\frac{1}{12n}$ in Theorem B(i) to $\frac{n}{3}$. Later,
Suh-Yang \cite{SY} improved this pinching constant to
$\frac{3}{7}n$.

In 1993, Chang \cite{C} proved Chern Conjecture (B) in dimension
three. Recently Deng-Gu-Wei \cite{DGW} showed that any closed
Willmore minimal hypersurface with constant scalar curvature in
$\mathbb{S}^{5}$ must be isoparametric.

On the stronger version of Chern Conjecture, Cheng-Ishikawa
\cite{ChengIshi} improved the pinching constant in Theorem B(ii) and
proved the second pinching theorem for compact minimal hypersurfaces
in $\mathbb{S}^{n+1}$ under the assumption that
$Ric_M\geq\frac{n}{2}$. In 2007, Wei-Xu \cite{WX} investigated Chern
Conjecture (C) and proved that if $M$ is a compact minimal
hypersurface in $\mathbb{S}^{n+1}$, $n=6, 7$, and if $n\leq S\leq n+
\delta_2(n)$, where $\delta_2(n)$ is a positive constant depending
only on $n$, then $S\equiv n$. Later, Zhang \cite{Zhang1} extended
the second pinching theorem due to Peng-Terng \cite{PT2} and Wei-Xu
\cite{WX} to the case of $n=8$. In 2011, Ding-Xin \cite{DX1}
obtained the following important rigidity result.
\begin{theorem}\label{thm2}
Let $M$ be an $n$-dimensional compact minimal hypersurface in the
unit sphere $\mathbb{S}^{n+1}$. If $n\geq6$, the squared norm of the
second fundamental form satisfies $0\leq S-n\leq
\frac{n}{23}$, then $S\equiv n$, i.e., $M$ is a Clifford torus.
\end{theorem}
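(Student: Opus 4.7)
The plan is to follow the Peng-Terng integral framework, refined by multiple-parameter test functions, to handle the pinched regime $n \leq S \leq n + n/23$. Write $h = (h_{ij})$ for the second fundamental form, so that $S = |h|^2$, and set $f_3 = \sum h_{ij}h_{jk}h_{ki}$. The starting point is Simons' identity for a compact minimal hypersurface in $\mathbb{S}^{n+1}$,
\begin{equation}
\tfrac{1}{2}\Delta S = |\nabla h|^2 + S(n-S),
\end{equation}
which integrates to $\int_M |\nabla h|^2 = \int_M S(S-n) \geq 0$ in the pinched regime $S \geq n$. Equality forces $\nabla h \equiv 0$, and Theorem~A then identifies $M$ as a Clifford torus. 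Thus the task is to convert the upper pinching $S - n \leq n/23$ into a matching reverse integral inequality that is strict unless $\nabla h \equiv 0$.

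To that end I would derive the second-order Simons identity for $\tfrac{1}{2}\Delta |\nabla h|^2$, which for a minimal hypersurface can be written schematically as $|\nabla^2 h|^2 + (c_1 n - c_2 S)|\nabla h|^2$ plus cubic and quartic contractions of $h$ with $\nabla h$ obtained by commuting covariant derivatives via the Gauss-Codazzi equations. Multiplying both Simons identities by weights of the form $(S - n)^\alpha$ and $|\nabla h|^\beta$ and integrating by parts produces a family of integral identities parametrised by $\alpha, \beta$. Two pointwise estimates will serve as the analytic engine: a Kato-type inequality $|\nabla^2 h|^2 \geq \tfrac{3(n+4)}{n+2}|\nabla|\nabla h||^2$ arising from the Codazzi symmetries of $\nabla h$, and a Peng-Terng-type algebraic bound controlling $f_3$ and the quartic contractions by a constant multiple of $S|\nabla h|^2$, sharpened by the fact that $S$ is close to $n$.

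The final step is to combine these pointwise bounds with the integral identities and tune the parameters so that all indefinite terms cancel, reducing the question to an inequality of the form
\begin{equation}
\int_M \Big(\tfrac{n}{23} - (S-n)\Big)\, W\, \mathd V \geq 0
\end{equation}
with a nonnegative weight $W$ that vanishes only where $\nabla h = 0$. The pinching hypothesis then forces $W \equiv 0$, hence $\nabla h \equiv 0$ and $S \equiv n$, and Theorem~A concludes that $M$ is a Clifford torus.

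The principal obstacle is the bookkeeping in the middle step: extracting sharp multiplicative constants from the integration by parts so that the pinching threshold $n/23$ actually emerges rather than a worse constant. Both the Kato inequality and the algebraic estimate on the cubic terms leak a small amount at each application, and the hypothesis $n \geq 6$ presumably reflects a low-dimensional loss that cannot be absorbed by the multi-parameter optimisation; coordinating the exponents $\alpha, \beta$ so that all indefinite integrals cancel simultaneously while the positive terms dominate is the essential technical difficulty.
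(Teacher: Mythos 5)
Your outline identifies the right starting points (Simons' identity, the Peng--Terng second-order identity, multi-parameter integration by parts), but it omits the single ingredient on which everything actually turns, and substitutes a dubious estimate for the one that is needed. Note first that Theorem~\ref{thm2} is cited from Ding--Xin in this paper, not proved here; the paper instead proves the stronger Main Theorem ($0 \le S - n \le n/18$), and a proof along those lines immediately gives Theorem~\ref{thm2}. Comparing your plan to that proof: the crucial missing piece is a quantitative upper bound on the Peng--Terng invariant $A - 2B = \sum h_{ijk}h_{ijl}h_{km}h_{ml} - 2\sum h_{ijk}h_{klm}h_{im}h_{jl}$, which is exactly the ``cubic and quartic contractions'' you refer to only schematically. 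The entire difficulty of the second pinching problem is concentrated in estimating $A-2B$; the paper devotes Section~3 to proving (via Sylvester resultants and a sequence of algebraic lemmas) that
\[
3(A-2B) \le \left(S + 4 + \sqrt[3]{cF}\right)|\nabla h|^2,
\qquad F = \sum_{i,j}(\lambda_i-\lambda_j)^2(1+\lambda_i\lambda_j)^2,
\]
with $c = \tfrac{24}{5} - \tfrac{16}{(1+\eta^{-1})n}$. Ding--Xin's original proof of Theorem~\ref{thm2} likewise hinges on a (weaker) bound of this type. Without specifying any such bound, the parameter-tuning step in your plan has nothing to push against, and no amount of optimisation in $\alpha,\beta$ will produce a threshold like $n/23$.

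Second, the pointwise estimate you invoke for $|\nabla^2 h|^2$ is not the one that works. You propose a Kato-type inequality $|\nabla^2 h|^2 \ge \tfrac{3(n+4)}{n+2}|\nabla|\nabla h||^2$; I do not see how to justify this, and it is not what is used. The operative estimate is the Cheng--Ishikawa inequality
\[
|\nabla^2 h|^2 \ge \tfrac{3}{4}F + \tfrac{3S(S-n)^2}{2(n+4)},
\]
where the appearance of $F$ is essential: it is precisely the term that absorbs the $\sqrt[3]{cF}$ coming out of the $A-2B$ bound after an application of Young's inequality. A Kato inequality in terms of $|\nabla|\nabla h||^2$ has no such coupling to $F$ and cannot close the argument. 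Finally, the ``multiple-parameter method'' in the paper is not power-weight multiplication by $(S-n)^\alpha |\nabla h|^\beta$; the parameters $\sigma,\varepsilon,\kappa,\eta$ enter through Young's inequality splittings and through the $A-2B$ estimate, and the pinching hypothesis $S \le (1+\eta^{-1})n$ is used to trade $S(S-n)^2$ for $\tfrac{n}{\eta}S(S-n)$. The restriction $n \ge 6$ is not a ``low-dimensional loss'' in the optimisation: it comes from the algebraic Lemma~\ref{ineqks}, which requires $s \ge 6$ (with $s \leftrightarrow S \ge n$); the cases $n = 3,4,5$ are handled separately by citing Peng--Terng, Cheng--Ishikawa, and Zhang.
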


In 2016, Xu-Xu \cite{XX3} gave a refined version of Ding-Xin's
rigidity theorem.
\begin{theorem}\label{thm3}
Let $M$ be  an $n$-dimensional compact minimal hypersurface in the
unit sphere $\mathbb{S}^{n+1}$. If the
squared length of the second fundamental form satisfies $0\leq S-n\leq\frac{n}{22}$, then $S\equiv n$ and $M$ is a Clifford torus.
\end{theorem}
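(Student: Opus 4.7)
The plan is to follow the integral-inequality scheme of Peng-Terng-Ding-Xin and refine the key quadratic estimates just enough to push the Ding-Xin pinching constant $\tfrac{1}{23}$ up to $\tfrac{1}{22}$. Throughout I work with the squared length $S=|A|^2$ of the second fundamental form, the symmetric curvature functions $f_3=\tr A^3$ and $f_4=\tr A^4$, and the covariant derivatives $h_{ijk},h_{ijkl}$, which are totally symmetric modulo Ricci corrections thanks to the Codazzi equations.

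The first input is Simons' identity $\tfrac{1}{2}\Delta S=|\nabla A|^2+S(n-S)$, which upon integration over the compact $M$ yields
$$\int_M |\nabla A|^2\,\mathd V=\int_M S(S-n)\,\mathd V,$$
so the pinching assumption $0\le S-n\le n/22$ already forces the $L^2$-norm of $\nabla A$ to be small. Differentiating once more, the Peng-Terng second-order identity has the schematic form
$$\tfrac{1}{2}\Delta|\nabla A|^2=|\nabla^2 A|^2+(\text{curvature-linear})\cdot|\nabla A|^2+(\text{cubic and quartic contractions of }A\text{ and }\nabla A),$$
and integration by parts then produces a linear relation among $\int|\nabla^2 A|^2$, $\int S|\nabla A|^2$, and weighted integrals involving $f_3$, $f_4$ and $S^3$. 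The Okumura-Li-Li algebraic inequalities $|f_3|\le\tfrac{n-2}{\sqrt{n(n-1)}}S^{3/2}$ and $f_4\le S^2$, together with their refined two-eigenvalue variants that saturate on the Clifford torus, are then used to reduce every cubic and quartic curvature term to a polynomial expression in $S$ alone.

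The main obstacle — and the place where the gain over \cite{DX1} must come from — is a sharper lower bound for the Peng-Terng invariant, of the schematic form
$$\int_M|\nabla^2 A|^2\,\mathd V\ge c_1\int_M S|\nabla A|^2\,\mathd V+c_2\int_M\Bigl(\sum_{i,j,k,l}h_{ijk}h_{ijl}\lambda_k\lambda_l\Bigr)\mathd V+(\text{lower order}),$$
with constants $c_1,c_2$ strictly larger than those available from the naive Cauchy-Schwarz estimate. The extra margin is extracted by exploiting the full symmetry of $h_{ijk}$ via Codazzi together with the Ricci commutation formula applied to $h_{ijkl}$, so that after integration by parts an additional nonnegative remainder survives. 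This is the delicate accounting that replaces $\tfrac{1}{23}$ by $\tfrac{1}{22}$.

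Finally I apply the multiple-parameter method: introduce free parameters $t_1,\dots,t_r\ge 0$, form a positive linear combination of the derived integral inequalities, and assemble the result into a single estimate
$$\int_M(S-n)\,P(S;t)\,\mathd V\le 0,$$
where $P(\,\cdot\,;t)$ is a polynomial in $S$ whose coefficients depend on the parameters. Choosing $t$ so that $P$ is strictly positive on the interval $[n,\,n+n/22]$ forces $S\equiv n$ pointwise. Once this is known, Theorem A (in the hypersurface case $p=1$ with $S=n$) identifies $M$ with one of the Clifford tori $\mathbb{S}^k\bigl(\sqrt{k/n}\bigr)\times\mathbb{S}^{n-k}\bigl(\sqrt{(n-k)/n}\bigr)$, completing the argument.
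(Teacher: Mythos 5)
Your proposal correctly identifies the macro-structure — integrate Simons' identity and the Peng--Terng second-order identity, estimate the resulting terms, and optimize free parameters until the coefficient in front of $\int_M(S-n)(\cdots)|\nabla h|^2\,dM$ turns strictly negative on the pinching interval — and that is indeed the skeleton of the argument. But the one step you single out as the source of the gain over Ding--Xin is where your proposal goes wrong, and that step is the whole ball game.

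You attribute the improvement to a sharper lower bound of the schematic form
\begin{equation*}
\int_M|\nabla^2 h|^2\,dM \;\geq\; c_1\!\int_M S|\nabla h|^2\,dM + c_2\!\int_M\Bigl(\textstyle\sum h_{ijk}h_{ijl}\lambda_k\lambda_l\Bigr)dM + \cdots,
\end{equation*}
obtained ``by exploiting the full symmetry of $h_{ijk}$ via Codazzi together with the Ricci commutation formula,'' and to Okumura/Li--Li-type bounds $|f_3|\leq\frac{n-2}{\sqrt{n(n-1)}}S^{3/2}$, $f_4\leq S^2$. None of this matches what actually closes the estimate. The genuinely new ingredient is a pointwise \emph{upper} bound for the Peng--Terng invariant $A-2B$ of the form
\begin{equation*}
3(A-2B)\;\leq\;\bigl(S+4+\sqrt[3]{c\,F}\bigr)\,|\nabla h|^2,\qquad F=\sum_{i,j}(\lambda_i-\lambda_j)^2(1+\lambda_i\lambda_j)^2,
\end{equation*}
valid under the pinching hypothesis, with an explicit constant $c$. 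The appearance of the cube root of $F$ — not a Ricci-commutator remainder, not an Okumura bound — is what lets one trade $A-2B$ against the Cheng--Ishikawa lower bound $|\nabla^2 h|^2\geq \tfrac34 F+\tfrac{3S(S-n)^2}{2(n+4)}$ via Young's inequality with tunable weights. Proving that cube-root estimate reduces to a family of two- and three-variable polynomial inequalities in the principal curvatures, which are established not by Cauchy--Schwarz or symmetry of $h_{ijk}$ but by Sylvester resultant and discriminant computations (showing certain discriminants never vanish, hence certain polynomials have constant sign). Your proposal contains no mechanism that produces the $F^{1/3}$ term, and the Okumura-type bounds on $f_3,f_4$ alone are too lossy to reach $\tfrac{n}{22}$, let alone the paper's $\tfrac{n}{18}$. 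Without the $A-2B$ estimate the parameter optimization at the end cannot make all coefficients negative on $[n,\,n+n/22]$, so as written the argument would not close. (A minor additional inaccuracy: once $S\equiv n$ is forced, the identification of $M$ as a Clifford torus comes from the equality analysis in Simons--Lawson--Chern--do Carmo--Kobayashi, since $\nabla h\equiv 0$; Theorem~A as stated assumes $S\leq n$, so it does not apply verbatim under the hypothesis $S\geq n$.)
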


In this paper, we verify the following rigidity theorem on the
stronger version of Chern Conjecture for minimal hypersurfaces in a
sphere.
\begin{Mthm}
Let $M$ be  an $n$-dimensional compact minimal hypersurface in the
unit sphere $\mathbb{S}^{n+1}$. If the
squared length of the second fundamental form satisfies $0\leq S-n\leq\frac{n}{18}$, then $S\equiv n$ and $M$ is a Clifford torus.
\end{Mthm}

The key ingredients of the proof of Main Theorem are as follows.
With the aid of the Sylvester theory, we derive a good estimate on
the upper bound for the Peng-Terng invariant $A-2B$. Using this
estimate and the multiple-parameter method (i.e., the generalized
Yau parameter method), we establish a new integral inequality on the
second fundamental form of $M$ and its gradient. By choosing
appropriate parameters, we verify that the coefficients of the
integrals in (\ref{ineq3p}) are both negative.

\section{Minimal hypersurfaces in a sphere}

Let $M$ be a hypersurface in the unit sphere $\mathbb{S}^{n + 1}$.
Denote by $\overline{\nabla}$ and $\nabla$ the Levi-Civita
connection on $\mathbb{S}^{n + 1}$ and the induced connection on
$M$, respectively. Let $h$ be the second fundamental form of $M$.
For tangent vector fields $X$ and $Y$ over $M$, we have the Gauss
formula
\[ \overline{\nabla}_X Y = \nabla_X Y + h (X, Y) . \]
We shall make use of the following convention on the range of
indices:
$$1\leq i, j, k, \ldots\leq n.$$
Choose a local orthonormal frame $\{ e_i \}$ for the tangent bundle
over $M$. Let $\nu$ be a local unit normal vector field of $M$. Set
$h (e_i, e_j) = h_{i j} \nu$. Denote by $S$ the squared length of
the second fundamental form of $M$, i.e., $S = \sum_{i, j} h_{i
j}^2$. We denote by $h_{i j k}$ the covariant derivative of $h_{i
j}$. It follows from the Codazzi equation that $h_{i j k}$ is
symmetric in $i, j$ and $k$.

From now on, we assume that $M$ is a compact minimal hypersurface in
$\mathbb{S}^{n + 1}$. Then $\sum_i h_{i i} = 0$. The following
Simons formula and Peng-Terng formula for minimal hypersurfaces in
$\mathbb{S}^{n + 1}$ can be found in \cite{S} and \cite{PT1,PT2},
respectively.
\begin{equation}
  \frac{1}{2} \Delta S = | \nabla h |^2 + S (n - S), \label{LapS}
\end{equation}

\begin{equation}
  \frac{1}{2} \Delta | \nabla h |^2 = | \nabla^2 h |^2 + (2 n + 3 - S) |
  \nabla h |^2 - 3 (A - 2 B) - \frac{3}{2} | \nabla S |^2, \label{Lapdh2}
\end{equation}
where
\[ A = \sum_{i, j, k, l, m} h_{i j k} h_{i j l} h_{k m} h_{m l}, \qquad B =
   \sum_{i, j, k, l, m} h_{i j k} h_{k l m} h_{i m} h_{j l} . \]
\begin{defi}
$A-2B$ is called the Peng-Terng invariant of $M$.
\end{defi}
Integrating (\ref{LapS}) and (\ref{Lapdh2}), we get
\begin{equation}
 \int_M | \nabla h |^2 d M= \int_M S (S - n) d M
\end{equation}
and
\begin{equation}
  \int_M | \nabla^2 h |^2 d M= \int_M \left[ - (2 n + 3 - S) | \nabla h |^2 + 3
  (A - 2 B) + \frac{3}{2} | \nabla S |^2 \right]d M. \label{ddh2}
\end{equation}

Choose a local orthonormal frame such that $h_{i j} = \lambda_i \delta_{i
j}$. Then $\sum_i \lambda_i = 0$, $\sum_i \lambda_i^2 = S$ and
\[ A - 2 B = \sum_{i, j, k} h_{i j k}^2 (\lambda_i^2 - 2 \lambda_i \lambda_j)
   . \]
For a positive integer $m$, we put $f_m = \sum_i \lambda_i^m$.
Peng-Terng \cite{PT2} obtained the following integral formula.
\[ \int_M (A - 2 B) d M= \int_M \left( S f_4 - f_3^2 - S^2 - \frac{1}{4} | \nabla
   S |^2 \right) d M. \]
Define
\[ F = \sum_{i, j} (\lambda_i -
   \lambda_j)^2 (1 + \lambda_i \lambda_j)^2 = 2 [S f_4 - f_3^2 - S^2 - S (S - n)]. \]
Then we get
\begin{equation}
  \int_M (A - 2 B) d M= \int_M \left( \frac{1}{2} F + | \nabla h |^2 -
  \frac{1}{4} | \nabla S |^2 \right) d M. \label{int2B}
\end{equation}
It follows from (\ref{LapS}) that
\[ \frac{1}{4} \Delta S^2 = \frac{1}{2} | \nabla S |^2 + S | \nabla h |^2 +
   S^2 (n - S) . \]
This implies
\begin{equation}
  \frac{1}{2} \int_M | \nabla S |^2 d M= \int_M [(n - S) | \nabla h |^2 + S (S -
  n)^2] d M. \label{intdS2}
\end{equation}
In \cite{ChengIshi}, Cheng and  Ishikawa gave the following
estimate.
\begin{equation}
  | \nabla^2 h |^2 \geq \frac{3}{4} F + \frac{3 S (S - n)^2}{2 (n + 4)} .
  \label{ddh2greaG}
\end{equation}

\section{An upper bound for the Peng-Terng invariant}

In this section, we will derive a new estimate on the upper bound
for the Peng-Terng invariant of minimal hypersurfaces in the unit
sphere $\mathbb{S}^{n+1}$, which is more effective than one given by
Ding-Xin \cite{DX1} in the study of the second pinching problem for
minimal hypersurfaces.

\begin{thm}\label{3Bless}
Let $M$ be a minimal hypersurface in $\mathbb{S}^{n + 1}$. If $n
\geq 6$, $\eta > 0$, and if $n \leq S \leq
  (1 + \eta^{- 1}) n$, then
  \[ 3 (A - 2 B) \leq \left( S + 4 + \sqrt[3]{c F} \right) | \nabla h
     |^2, \]
  where $c = \frac{24}{5} - \frac{16}{(1 + \eta^{- 1}) n}$.
\end{thm}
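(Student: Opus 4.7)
The plan is to reduce the claim to a pointwise algebraic inequality in the eigenvalues of the second fundamental form, and to establish that inequality via an identity that exposes the structure of $F$ together with an AM--GM step that produces the cubic root.

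At a point of $M$, I would choose a local orthonormal frame diagonalizing the second fundamental form so that $h_{ij}=\lambda_i\delta_{ij}$, with $\sum_i\lambda_i=0$ and $\sum_i\lambda_i^2=S$. Because $h_{ijk}$ is totally symmetric by Codazzi, symmetrizing the defining sums of $A$ and $B$ in all three indices gives
\[
3(A-2B)\;=\;\sum_{i,j,k}h_{ijk}^{\,2}\,\mu_{ijk},\qquad
\mu_{ijk}\;=\;2(\lambda_i^2+\lambda_j^2+\lambda_k^2)-(\lambda_i+\lambda_j+\lambda_k)^2.
\]
So it suffices to prove the pointwise bound $\mu_{ijk}\leq S+4+\sqrt[3]{cF}$ for every ordered triple $(i,j,k)$; multiplying by $h_{ijk}^{\,2}$ and summing then yields the theorem.

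For the pointwise bound, I would start from the elementary identity
\[
\mu_{ijk}\;=\;(\lambda_i-\lambda_j-\lambda_k)^2-4\lambda_j\lambda_k\;=\;(\lambda_i-\lambda_j-\lambda_k)^2+4-4(1+\lambda_j\lambda_k)
\]
and its two cyclic analogues. These isolate the factors $1+\lambda_p\lambda_q$ that appear in each summand of $F=\sum_{p,q}(\lambda_p-\lambda_q)^2(1+\lambda_p\lambda_q)^2$, and already produce the additive constant $4$. I would then combine the three cyclic identities, use the trace-free condition $\sum_l\lambda_l=0$ to rewrite $\lambda_i-\lambda_j-\lambda_k=2\lambda_i+\sum_{l\notin\{i,j,k\}}\lambda_l$, and apply a Sylvester-type symmetric-function manipulation that expresses the excess $\mu_{ijk}-S-4$ as a trilinear form in the three pair-quantities $(\lambda_p-\lambda_q)(1+\lambda_p\lambda_q)$ for $(p,q)\in\{(i,j),(j,k),(i,k)\}$. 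Since each of these three terms has square bounded by $F$, AM--GM applied to their product extracts a single cubic root of $F$. The pinching hypothesis $n\leq S\leq(1+\eta^{-1})n$ enters when a residual coefficient involving $S$ is replaced by its upper bound, which is exactly what produces the explicit constant $c=\tfrac{24}{5}-\tfrac{16}{(1+\eta^{-1})n}$; the assumption $n\geq 6$ appears in the index bookkeeping for contributions from indices outside the triple $\{i,j,k\}$.

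The main obstacle is this middle step. A naive pair-by-pair AM--GM yields only a $\sqrt{F}$-type bound, which is essentially the content of the earlier estimates underlying \cite{DX1,XX3}. Forcing all three pair contributions into a single symmetric product, and simultaneously producing the sharp constant $\tfrac{24}{5}$, requires the precise Sylvester-type identity together with an optimization across the three pair-terms simultaneously rather than term by term. This refined algebraic input is what eventually allows the pinching constant for the Main Theorem to be enlarged from $n/22$ to $n/18$.
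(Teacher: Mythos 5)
Your reduction is essentially the paper's: symmetrizing $A-2B$ over the totally symmetric $h_{ijk}$ gives $3(A-2B)=\sum_{i,j,k}h_{ijk}^2\mu_{ijk}$ with $\mu_{ijk}=\lambda_i^2+\lambda_j^2+\lambda_k^2-2(\lambda_i\lambda_j+\lambda_j\lambda_k+\lambda_i\lambda_k)$, and the theorem would follow from the pointwise bound $\mu_{ijk}\leq S+4+\sqrt[3]{cF}$. The paper actually organizes this sum into three pieces according to index coincidences (all distinct, $i=j\neq k$, $i=j=k$) and discards the last since its coefficient is $-3\lambda_i^2\leq 0$; the remaining two pieces are then handled by \emph{different} inequalities with \emph{different} effective constants, which is a structural feature your proposal misses.

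The middle step you flag as the obstacle is in fact where your proposal goes astray, not just where it is vague. You suggest writing $\mu_{ijk}-S-4$ as a trilinear form in the three pair-quantities $(\lambda_p-\lambda_q)(1+\lambda_p\lambda_q)$ and applying AM--GM to their product; but that product is degree $9$ in the $\lambda$'s while the excess is degree $2$, so no such homogeneous bound can hold, and no AM--GM of a product appears anywhere in the argument. What the paper actually proves (Lemmas \ref{ineqef}--\ref{AlIneq1}) are direct cube bounds: for distinct indices, $-2(xy+yz+zx+2)^3<(x-y)^2(xy+1)^2+(x-z)^2(xz+1)^2+(z-y)^2(yz+1)^2$, giving $\mu_{ijk}\leq S+4+\sqrt[3]{2F}$ with the fixed constant $2<c$; and for the $h_{iij}^2$ terms, writing $\mu_{iji}=\lambda_j^2-4\lambda_i\lambda_j=\lambda_i^2+\lambda_j^2+4-(\lambda_i^2+4\lambda_i\lambda_j+4)$ and proving $-(x^2+4xy+4)^3<\tfrac{16}{5}(3-\tfrac{10}{s})(x-y)^2(1+xy)^2$ on $x^2+y^2\leq s$, which is where the pinching $S\leq(1+\eta^{-1})n$ produces the sharp $c$. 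Both inequalities are established by showing a discriminant (in $\tau$, resp.\ $k$) is strictly negative so the relevant polynomial never vanishes, together with a check at one point --- this is the genuine content of the theorem and is entirely absent from your sketch. Two further inaccuracies: the trace-free condition $\sum_i\lambda_i=0$ is never used in this proof (only $\sum_i\lambda_i^2=S$ enters, to bound $\lambda_i^2+\lambda_j^2+\lambda_k^2\leq S$), and $n\geq 6$ is needed to guarantee $S\geq 6$ so that Lemma \ref{AlIneq1} applies, not for any ``index bookkeeping.''
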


To prove Theorem \ref{3Bless}, we need establish several crucial
algebraic inequalities. For this purpose, we recall the Sylvester
theory for resultants of polynomials. Let $P$ and $Q$ be two nonzero
polynomials in the indeterminate $x$, respectively of degree $m$ and
$k$. More precisely, let
\[ P = p_m x^m + p_{m - 1} x^{m - 1} + \cdots + p_1 x + p_0, \]
\[ Q = q_k x^k + q_{k - 1} x^{k - 1} + \cdots + q_1 x + q_0 . \]
The Sylvester matrix of $P$ and $Q$ is an $(m + k) \times (m + k)$ matrix
formed by
\[ \tmop{Syl}_x (P, Q) = \left[\begin{array}{ccccccc}
     p_m & p_{m - 1} & \cdots & p_0 &  &  & \\
     & p_m & p_{m - 1} & \cdots & p_0 &  & \\
     &  & \ddots \quad & \ddots &  & \ddots & \\
     &  &  & p_m & p_{m - 1} & \cdots & p_0\\
     q_k & q_{k - 1} & \cdots & q_0 &  &  & \\
     & q_k & q_{k - 1} & \cdots & q_0 &  & \\
     &  & \ddots \quad & \ddots &  & \ddots & \\
     &  &  & q_k & q_{k - 1} & \cdots & q_0
   \end{array}\right]
   \!\!\!\!\!\!\!\!
   \begin{array}{l}
     \left. \begin{array}{l}
       \\ \\ \\ \\
     \end{array} \right\} k \enspace \tmop{rows}\\ \\
     \left. \begin{array}{l}
       \\ \\ \\ \\
     \end{array} \right\} m \enspace \tmop{rows}
   \end{array} . \]
The resultant of $P$ and $Q$ is defined as the determinant of their Sylvester
matrix:
\[ \tmop{res}_x (P, Q) = \det (\tmop{Syl}_x (P, Q)) . \]
The resultant of two polynomials with complex coefficients is zero if and only
if they have a common root in the complex number field.

Let $P$ be a polynomial in the indeterminate $x$ of positive degree $m$, with
$p_m$ as leading coefficient. Then its derivative $P'$ is a polynomial of
degree $m - 1$. The discriminant of $P$ is defined as
\[ \tmop{disc}_x (P) = \frac{(- 1)^{\frac{m (m - 1)}{2}}}{p_m} \tmop{res}_x
   (P, P') . \]

\begin{proposition}\label{prop3.1} For a polynomial with real coefficients, the sign of the discriminant provides
the following information on the nature of the roots of the polynomial.\\
(i) The discriminant is zero if and only if the polynomial has a
multiple root. \\
(ii) The discriminant is positive if and only if the number of
non-real roots is a
multiple of 4. \\
(iii) The discriminant is negative if and only if the number of
non-real roots is even but not a multiple of 4.
\end{proposition}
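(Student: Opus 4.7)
The plan is to reduce Proposition \ref{prop3.1} to the classical product formula expressing the discriminant in terms of the roots, and then to analyze the sign of that formula by grouping the roots into real ones and complex-conjugate pairs. First I would establish, or invoke as standard, the identity
\[
\tmop{disc}_x(P) = p_m^{2m-2}\prod_{1\le i<j\le m}(\alpha_i-\alpha_j)^2,
\]
where $\alpha_1,\ldots,\alpha_m\in\mathbb{C}$ are the roots of $P$ counted with multiplicity. This comes from Poisson's product formula for the resultant: writing $P(x)=p_m\prod_i(x-\alpha_i)$ gives $P'(\alpha_i)=p_m\prod_{j\ne i}(\alpha_i-\alpha_j)$, hence $\tmop{res}_x(P,P')=p_m^{m-1}\prod_i P'(\alpha_i)=p_m^{2m-1}\prod_{i\ne j}(\alpha_i-\alpha_j)$; pairing the ordered pairs $(i,j)$ and $(j,i)$ produces a sign $(-1)^{m(m-1)/2}$ that cancels the corresponding factor in the definition of $\tmop{disc}_x(P)$, leaving the claimed expression with a positive leading factor $p_m^{2m-2}$.

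Part (i) is then immediate, since $\prod_{i<j}(\alpha_i-\alpha_j)^2$ vanishes precisely when two of the $\alpha_i$ coincide, i.e.\ exactly when $P$ has a multiple root.

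For (ii) and (iii), I would assume $P$ has real coefficients and no multiple root, and list the roots as $\alpha_1,\ldots,\alpha_r\in\mathbb{R}$ together with $s$ conjugate pairs $\beta_k,\bar\beta_k$, $k=1,\ldots,s$, so $m=r+2s$ and the number of non-real roots equals $2s$. The key step is to split the factors of $\prod_{i<j}(\alpha_i-\alpha_j)^2$ into four types and record the sign of each: (a) real--real factors $(\alpha_i-\alpha_j)^2>0$; (b) the $s$ \emph{internal} pair factors $(\beta_k-\bar\beta_k)^2=-4(\operatorname{Im}\beta_k)^2<0$; (c) real--complex factors combined in conjugate pairs as $(\alpha_i-\beta_k)^2(\alpha_i-\bar\beta_k)^2=|\alpha_i-\beta_k|^4>0$; and (d) pair--pair cross factors, which combine as $(\beta_k-\beta_l)^2(\bar\beta_k-\bar\beta_l)^2=|\beta_k-\beta_l|^4>0$ and $(\beta_k-\bar\beta_l)^2(\bar\beta_k-\beta_l)^2=|\beta_k-\bar\beta_l|^4>0$. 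Together with the positive factor $p_m^{2m-2}$, only the $s$ internal factors in (b) contribute signs, so $\operatorname{sgn}(\tmop{disc}_x(P))=(-1)^s$. Since the number of non-real roots is $2s$, the conditions $s$ even and $s$ odd translate into $2s$ being a multiple of $4$ and $2s$ being even but not a multiple of $4$, which are exactly the statements (ii) and (iii).

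The main obstacle, if one wants a fully self-contained treatment, is deriving Poisson's product formula for the resultant from the Sylvester-matrix definition; once that formula is accepted as a standard fact, the remainder reduces to the sign bookkeeping under complex conjugation carried out above. In the body of the paper I would therefore cite the resultant identity and present the proof essentially as the elementary sign count in the last paragraph.
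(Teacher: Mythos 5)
Your proof is correct, and it is the standard classical argument: the product formula $\tmop{disc}_x(P)=p_m^{2m-2}\prod_{i<j}(\alpha_i-\alpha_j)^2$ followed by the sign count over conjugate pairs, where only the $s$ factors $(\beta_k-\bar\beta_k)^2<0$ contribute, giving $\operatorname{sgn}(\tmop{disc}_x(P))=(-1)^s$. The paper itself offers no proof of Proposition \ref{prop3.1} --- it is quoted as a known fact from the classical theory of resultants --- so your write-up simply supplies the standard justification; the only point worth making explicit is that in parts (ii) and (iii) one must (as you do) restrict to polynomials without multiple roots, since otherwise the ``if'' directions would fail with a vanishing discriminant.
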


The following lemmas will be used in the proof of Theorem
\ref{3Bless}.

\begin{lemma}
  \label{ineqef}For all $\sigma, \tau \in \mathbb{R}$, we have
  \begin{eqnarray}
    &  & 6 \tau^6 + 4 (4 \sigma^2 + 9) \tau^4 - 2 (10 \sigma^4 + 108 \sigma^2 +
    243) \tau^2 \nonumber\\
    &  & + 7 \sigma^6 + 126 \sigma^4 + 729 \sigma^2 + 1296 \label{sigmatau}\\
    & > & (| \sigma^3 - 2 \sigma \tau^2 + 9 \sigma | + 2 | \tau |^3)^2 . \nonumber
  \end{eqnarray}
\end{lemma}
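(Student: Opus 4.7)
The plan is to reduce the lemma to a single polynomial positivity statement in $\sigma$ (with $\tau$ as parameter) and then to apply the Sylvester discriminant framework recalled above. By the invariance of both sides under $\sigma \mapsto -\sigma$ and $\tau \mapsto -\tau$, I may assume $\sigma, \tau \geq 0$. Setting $E := \sigma(\sigma^2 - 2\tau^2 + 9)$ so that the right-hand side of \eqref{sigmatau} equals $E^2 + 4\tau^3|E| + 4\tau^6$, a direct expansion yields
\[ \text{LHS of }\eqref{sigmatau} - E^2 - 4\tau^6 = 2\,g(\sigma,\tau), \]
where
\[ g(\sigma,\tau) := \tau^6 + 6(\sigma^2+3)\tau^4 - (8\sigma^4+90\sigma^2+243)\tau^2 + 3(\sigma^2+6)^3. \]
So the lemma is equivalent to $g > 2\tau^3|E|$. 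Since $g(\sigma,0) = 3(\sigma^2+6)^3 > 0$ and $g\to+\infty$ with $\tau^2$, analyzing $g$ as a cubic in $y=\tau^2$ at its unique positive critical point shows $g > 0$ throughout. Both sides of $g > 2\tau^3|E|$ being nonnegative, squaring reduces the problem to
\[ (g - 2\tau^3 E)(g + 2\tau^3 E) > 0; \]
since $g$ is even and $E$ odd in $\sigma$, the map $\sigma\mapsto-\sigma$ swaps the two factors, so it suffices to prove $P(\sigma,\tau) := g + 2\tau^3 E > 0$ for all real $\sigma, \tau$.

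Viewed as a polynomial in $\sigma$ of degree $6$, $P$ has leading coefficient $3$ and constant term (in $\sigma$) equal to $\tau^6 + 18\tau^4 - 243\tau^2 + 648$, which is strictly positive for every $\tau$ (another short cubic check on $y^3+18y^2-243y+648$ with $y=\tau^2$). Thus $P>0$ at $\sigma=0$ and at $\sigma\to\pm\infty$, so the assertion $P>0$ on $\mathbb{R}$ amounts to $P$ having no real root in $\sigma$. To certify this, I form the $11\times 11$ Sylvester matrix $\operatorname{Syl}_\sigma(P,P')$ and extract $\operatorname{disc}_\sigma(P)$ as an explicit polynomial in $\tau^2$. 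By Proposition \ref{prop3.1}, $P$ has all six roots non-real precisely when $\operatorname{disc}_\sigma(P)<0$ and, additionally, a principal subresultant (equivalently, a Sturm-sequence sign count) excludes the residual possibility of four real roots.

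The main obstacle is the symbolic sign analysis of $\operatorname{disc}_\sigma(P)$, a polynomial of sizeable degree in $\tau^2$. A clean factorization, for instance into a nonnegative square times a polynomial in $\tau^2$ of definite negative sign, would close the argument at once; I expect this to be feasible in view of the tight, nearly degenerate shape of the inequality (numerical probing shows $g-2\tau^3|E|$ comes extremely close to $0$ near, e.g.\ $(\sigma^2,\tau^2)\approx(3.4,\,9)$, without ever vanishing). As a backup, should the discriminant route prove unwieldy, I would instead search for an explicit sum-of-squares certificate $P(\sigma,\tau) = Q_1(\sigma,\tau)^2 + Q_2(\sigma,\tau)^2 + R(\tau^2)$ with $R\ge 0$, which bypasses the Sylvester machinery and makes positivity of $P$ manifest.
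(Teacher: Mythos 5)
Your reduction is sound in outline and is essentially the same strategy the paper uses: remove the absolute values, split off a polynomial $P$ whose positivity implies the lemma, and certify positivity by showing a discriminant with respect to one variable is never zero while $P$ is manifestly positive at one parameter value. (Indeed, your $P = g + 2\tau^3 E$ equals the paper's $P(\sigma,-\tau)$, so the target polynomial is the same up to a reflection. And the detour through "$g>0$ so we may square" is unnecessary: $g > 2|\tau|^3|E|$ is literally equivalent to $g+2\tau^3E>0$ \emph{and} $g-2\tau^3E>0$, i.e.\ to $P(\sigma,\tau)>0$ for all $\sigma,\tau$, so you can pass to $P>0$ directly without first proving $g>0$; that claim becomes automatic by averaging $P(\sigma,\tau)$ and $P(-\sigma,\tau)$.)

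The genuine gap is that the certificate itself is missing. You state "I form $\operatorname{Syl}_\sigma(P,P')$ and extract $\operatorname{disc}_\sigma(P)$" and then write that you "expect" a clean factorization to make its sign manifest, offering a sum-of-squares certificate as backup; neither is carried out. This is the entire content of the lemma. Moreover, your choice of variable differs from the paper's: the paper takes $\operatorname{disc}_\tau(P)$ and obtains the very clean factorization $-102036672\,(\sigma^2+6)^3\cdot(\text{a polynomial in }\sigma^2\text{ with all positive coefficients})<0$, after which the argument closes immediately (disc never vanishes, so $\Psi(\sigma)=\inf_\tau P$ never vanishes; $\Psi(0)>0$ by an explicit decomposition of $P(0,\tau)$; continuity gives $\Psi>0$). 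You have not verified that $\operatorname{disc}_\sigma(P)$ admits a comparably tractable form — both discriminants are degree $15$ in the squared variable, but only one is known to factor so cleanly. You also do not need subresultants or Sturm sequences: since the leading coefficient of $P$ in $\sigma$ is the constant $3$, the real-root count is locally constant wherever the discriminant is nonzero, and one evaluation ($\tau=0$, where $P = 3(\sigma^2+6)^3$) finishes it; this is exactly the paper's mechanism transposed. Until you either produce the explicit sign analysis of a discriminant (whichever variable) or exhibit an SOS decomposition, the proposal is a plan, not a proof.
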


\begin{proof}
  We set
  \begin{eqnarray*}
    P (\sigma, \tau) & = & \frac{1}{2} [6 \tau^6 + 4 (4 \sigma^2 + 9) \tau^4 - 2
    (10 \sigma^4 + 108 \sigma^2 + 243) \tau^2 \\
    &  &  + 7 \sigma^6 + 126 \sigma^4 + 729 \sigma^2 + 1296 -
    (\sigma^3 - 2 \sigma \tau^2 + 9 \sigma + 2 \tau^3)^2] .
  \end{eqnarray*}
Denote by $LHS$ and $RHS$ the left hand side and the right hand side
of
(\ref{sigmatau}), respectively. We see the following facts.\\
(i) If $\sigma^3 - 2 \sigma \tau^2 + 9 \sigma$ and $\tau$ have the
same sign, then $LHS-RHS=2 P (\sigma, \tau)$.\\
(ii) If $\sigma^3 - 2 \sigma
  \tau^2 + 9 \sigma$ and $\tau$ have different sign, then $LHS-RHS=2 P (\sigma, -\tau).$

Thus, to prove Lemma \ref{ineqef}, it is sufficient to show that
  $P (\sigma,  \tau) > 0$ for all $\sigma, \tau \in \mathbb{R}$.

  Computing the discriminant of $P$ with respect to $\tau$, we get
  \begin{eqnarray*}
     \tmop{disc}_{\tau} (P (\sigma, \tau))
    & = & - 102036672 (\sigma^2 + 6)^3 \times\\
    &  & (12 \,\sigma^{12} + 508 \,\sigma^{10} + 9034 \,\sigma^8 + 86582 \,\sigma^6 \\&&+
    471177 \,\sigma^4 + 1376352 \,\sigma^2 + 1679616)\\
    & < & 0.
  \end{eqnarray*}

  Set $\Psi (\sigma) = \inf_{\tau \in \mathbb{R}} P (\sigma, \tau)$. Since
  $\tmop{disc}_{\tau} (P (\sigma, \tau))$ can never be zero, we get $\Psi
  (\sigma) \neq 0$ for all $\sigma \in \mathbb{R}$. On the other hand, we
  have
  \begin{eqnarray*}
    P (0, \tau) & = & \tau^6 + 18 \tau^4 - 243 \tau^2 + 648\\
    & = & \left( \tau^2 + 6 \sqrt{13} + 6 \right)  \left( \tau^2 + 6 - 3
    \sqrt{13} \right)^2 + \frac{648}{13 \sqrt{13} + 47}\\
    & \geq & \frac{648}{13 \sqrt{13} + 47} .
  \end{eqnarray*}
  From $\Psi (0) > 0$ and the continuity of $\Psi (\sigma)$, we obtain $\Psi
  (\sigma) > 0$ for all $\sigma \in \mathbb{R}$. Hence we have proved $P
  (\sigma, \tau) > 0$ for all $\sigma, \tau \in \mathbb{R}$.
\end{proof}

\begin{lemma}
  \label{AlIneq2}For all $x, y, z \in \mathbb{R}$, we have
  \begin{eqnarray*}
   &&- 2 (x y + y z + z x + 2)^3 \\&<& (x - y)^2  (xy + 1)^2 + (x - z)^2  (xz +
     1)^2 + (z - y)^2  (yz + 1)^2 .
  \end{eqnarray*}
\end{lemma}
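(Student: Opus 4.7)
The plan is to prove the strict inequality $\Phi(x,y,z) := \sum_{\text{cyc}}(x-y)^2(xy+1)^2 + 2(xy+yz+zx+2)^3 > 0$ by exploiting the full $S_3$-symmetry of both sides and then following the discriminant strategy already used in Lemma~\ref{ineqef}. First, using the classical identities $\sum x^3 y^3 = \sigma_2^3 - 3\sigma_1\sigma_2\sigma_3 + 3\sigma_3^2$ and $\sum_{i\neq j} x_i^4 x_j^2 = (\sigma_1^2-2\sigma_2)(\sigma_2^2-2\sigma_1\sigma_3) - 3\sigma_3^2$, I would express $\Phi$ as an explicit polynomial in the elementary symmetric functions $\sigma_1 = x+y+z$, $\sigma_2 = xy+yz+zx$, $\sigma_3 = xyz$. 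Viewed as a polynomial in $\sigma_2$ with $(\sigma_1,\sigma_3)$ fixed, $\Phi$ turns out to be a cubic with leading coefficient $-2$, a structural feature that drives the rest of the argument.

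Next, I would split into two cases according to the sign of $s := \sigma_2+2$. When $s \geq 0$, the LHS of the claim is $\leq 0$ while the RHS is a sum of squares, so $\Phi \geq 0$ trivially. For strict inequality I enumerate the configurations where the RHS vanishes: the cyclic conditions $(x-y)(xy+1) = (y-z)(yz+1) = (z-x)(zx+1) = 0$ force (after a short case analysis) either $x=y=z$ or, up to permutation, $x=y$ with $z=-1/x$ and $x \ne 0$; in each of these cases the LHS is strictly negative, giving $\Phi > 0$ throughout this regime.

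For the hard case $\sigma_2 < -2$ (where the LHS is strictly positive), the cubic structure in $\sigma_2$ with negative leading coefficient yields $\Phi \to +\infty$ as $\sigma_2 \to -\infty$, so it suffices to show $\Phi$ has no real root in $\sigma_2$ on the admissible interval. Following the template of Lemma~\ref{ineqef}, I would compute $\tmop{disc}_{\sigma_2} \Phi$ as a polynomial in $(\sigma_1,\sigma_3)$ and argue it is sign-definite on the semialgebraic region where $(\sigma_1,\sigma_2,\sigma_3)$ actually arises from real $x,y,z$ (cut out by $\tmop{disc}_t\bigl(t^3-\sigma_1 t^2 + \sigma_2 t - \sigma_3\bigr) \geq 0$). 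A probe at a convenient base point (e.g.\ $\sigma_1 = \sigma_3 = 0$, where $\Phi$ reduces to a single-variable polynomial handled essentially as in Lemma~\ref{ineqef}) would then fix the global sign and finish the proof.

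The main obstacle is the discriminant step: unlike in Lemma~\ref{ineqef}, where the reduction was to a genuine two-variable problem, here $\tmop{disc}_{\sigma_2}\Phi$ is a polynomial in the two auxiliary parameters $\sigma_1,\sigma_3$, and showing sign-definiteness on the admissible cone may require a nested discriminant or Sylvester-style argument. A possible shortcut would be to find a direct SOS decomposition of $\Phi$ valid on the branch $\sigma_2 < -2$, grouping the degree-six squares against the negative cubic term, but producing such a decomposition is itself nontrivial. In any case, the heart of the proof is a symbolic algebraic calculation whose organisation by symmetry is the conceptual content.
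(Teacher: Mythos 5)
Your opening moves (pass to $\sigma_1,\sigma_2,\sigma_3$, observe that $P$ is a cubic in $\sigma_2$ with leading coefficient $-2$, dispose of the regime $\sigma_2\geq -2$ by sign considerations plus the explicit zero set $x=y=z$ or $(a,a,-1/a)$) are correct and match the first step of the paper. The genuine gap, which you partly flag yourself, is in the hard regime $\sigma_2<-2$. A $\tmop{disc}_{\sigma_2}$ argument over the two free parameters $(\sigma_1,\sigma_3)$ is not merely ``nontrivial'': even granting sign-definiteness of that discriminant on the admissible cone, you would still have to track how the (necessarily existing) real root of the cubic in $\sigma_2$ interacts with the moving boundary of the admissible $\sigma_2$-interval and with the hyperplane $\sigma_2=-2$ as $(\sigma_1,\sigma_3)$ vary, and nothing in your sketch handles that. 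You have exchanged a hard three-variable inequality for a hard constrained two-parameter discriminant problem; that is a transcription, not a reduction.

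The paper's route is different in a way your proposal misses: it does not case-split on $\sigma_2$, and it uses Lemma~\ref{ineqef} as an actual ingredient rather than as a methodological template. After passing to symmetric functions it sets $\tau=\sqrt{\sigma_1^2-3\sigma_2}$ and exhibits the exact identity
\[ 81P \;=\; L(\sigma_1,\tau)\;-\;\bigl(2\sigma_1^3-5\sigma_1\tau^2+9\sigma_1-27\sigma_3\bigr)^2, \]
where $L(\sigma_1,\tau)$ is precisely the left-hand side of (\ref{sigmatau}) with $\sigma=\sigma_1$. Lemma~\ref{ineqef} bounds $L$ strictly below by $\bigl(|\sigma_1^3-2\sigma_1\tau^2+9\sigma_1|+2\tau^3\bigr)^2$, while the reality of $x,y,z$, i.e.\ $\tmop{disc}_\lambda(\lambda^3-\sigma_1\lambda^2+\sigma_2\lambda-\sigma_3)\geq 0$, gives $2\tau^3\geq|3\sigma_1\tau^2-\sigma_1^3+27\sigma_3|$. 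Combining these and using $(|a|+|b|)^2\geq(a-b)^2$ absorbs the subtracted square, giving $81P>0$ outright. The idea you are missing is this perfect-square decomposition of $81P$, which is exactly what lets Lemma~\ref{ineqef} enter as a black box and keeps the whole argument at a single nested discriminant rather than two.
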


\begin{proof}
  Set
  \begin{eqnarray*}
    P &=& 2 (x y + y z + z x + 2)^3 \\&&+ (x - y)^2  (xy + 1)^2 + (x - z)^2  (xz +
     1)^2 + (z - y)^2  (yz + 1)^2 .
        \end{eqnarray*}
  Since $P$ is a symmetric polynomial, it can be expressed in terms of
  elementary symmetric polynomials. Let
  \[ \sigma_1 = x + y + z, \quad \sigma_2 = x y + y z + z x, \quad \sigma_3 =
     x y z, \]
  \[ \tau = \sqrt{\frac{1}{2} [(x - y)^2 + (x - z)^2 + (z - y)^2 ]} =
     \sqrt{\sigma_1^2 - 3 \sigma_2} . \]
  Then we have
  \begin{eqnarray*}
    P & = & - 9 \sigma_3^2 + (- 2 \sigma_1^3 + 10 \sigma_2 \sigma_1 + 6
    \sigma_1) \sigma_3\\
    &  & - 2 \sigma_2^3 + \sigma_1^2 \sigma_2^2 + 4 \sigma_2^2 + 2 \sigma_1^2
    \sigma_2 + 18 \sigma_2 + 2 \sigma_1^2 + 16\\
    & = & \frac{1}{81} \times [- (- 5 \sigma_1 \tau^2 + 2 \sigma_1^3 + 9
    \sigma_1 - 27 \sigma_3)^2 \\
    &  & + 6 \tau^6 + 4 (4 \sigma_1^2 + 9) \tau^4 - 2 (10 \sigma_1^4 + 108
    \sigma_1^2 + 243) \tau^2\\
    &  &  + 7 \sigma_1^6 + 126 \sigma_1^4 + 729 \sigma_1^2 + 1296]
    .
  \end{eqnarray*}
  Using Lemma \ref{ineqef}, we get
  \begin{equation}
    81 P > (| \sigma_1^3 - 2 \sigma_1 \tau^2 + 9 \sigma_1 | + 2 \tau^3)^2 - (-
    5 \sigma_1 \tau^2 + 2 \sigma_1^3 + 9 \sigma_1 - 27 \sigma_3)^2 .
    \label{81Pgre}
  \end{equation}

  Notice that $x, y, z$ are the three real roots of the polynomials $\lambda^3 -
  \sigma_1 \lambda^2 + \sigma_2 \lambda - \sigma_3$. We have
  \begin{eqnarray*}
    0 & \leq & \tmop{disc}_{\lambda} (\lambda^3 - \sigma_1 \lambda^2 +
    \sigma_2 \lambda - \sigma_3)\\
    & = & \sigma_1^2 \sigma_2^2 - 4 \sigma_2^3 - 4 \sigma_1^3 \sigma_3 - 27
    \sigma_3^2 + 18 \sigma_1 \sigma_2 \sigma_3\\
    & = & \frac{4}{27} \tau^6 - \frac{1}{27}  (3 \sigma_1 \tau^2 - \sigma_1^3 +
    27 \sigma_3)^2 .
  \end{eqnarray*}
  Thus we have
  \begin{equation}
    2 \tau^3 \geq | 3 \sigma_1 \tau^2 - \sigma_1^3 + 27 \sigma_3 | .
    \label{2f3gre}
  \end{equation}

  From (\ref{81Pgre}) and (\ref{2f3gre}), we obtain
  \begin{eqnarray*}
    81 P & > & (| \sigma_1^3 - 2 \sigma_1 \tau^2 + 9 \sigma_1 | + | 3 \sigma_1
    \tau^2 - \sigma_1^3 + 27 \sigma_3 |)^2\\
    &  & - (- 5 \sigma_1 \tau^2 + 2 \sigma_1^3 + 9 \sigma_1 - 27
    \sigma_3)^2\\
    & \geq & [(\sigma_1^3 - 2 \sigma_1 \tau^2 + 9 \sigma_1) - (3
    \sigma_1 \tau^2 - \sigma_1^3 + 27 \sigma_3)]^2\\
    &  & - (- 5 \sigma_1 \tau^2 + 2 \sigma_1^3 + 9 \sigma_1 - 27
    \sigma_3)^2\\
    & = & 0.
  \end{eqnarray*}
\end{proof}

\begin{lemma}
  \label{ineqks}For $s \geq 6$, $k \in \mathbb{R}$, we have
  \[ 16 (3 s - 10) (k - 1)^2 (k^2 + ks + 1)^2 + 5 (4 k^2 + 4 ks + s + 4)^3 >
     0. \]
\end{lemma}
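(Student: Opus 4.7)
Denote the left-hand side by $P(s,k)$. Since $s\ge 6$ gives $3s-10>0$, the first summand is always nonnegative. I split on the sign of $Q:=4k^2+4ks+s+4$. When $Q\ge 0$ the cube summand is also nonnegative, and strict positivity of $P$ follows because if $Q=0$ then $k\ne 1$ and $k^2+ks+1\ne 0$ (indeed, $k=1$ gives $Q=5s+8>0$, while $k^2+ks+1=0$ gives $Q=s>0$), so the first summand is strictly positive; if $Q>0$, the cube alone is strictly positive.

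The substantive case is $Q<0$. Here $4k^2+4ks<-(s+4)$ forces $k\in(-s,0)$ and $k^2+ks+1<-s/4$. Using the identity $Q-s=4(k^2+ks+1)$, I rewrite
\[
P(s,k)=(3s-10)(k-1)^2(Q-s)^2+5Q^3,
\]
so the claim becomes $(3s-10)(k-1)^2(Q-s)^2>-5Q^3$. Setting $\delta:=\sqrt{s^2-s-4+Q}\in[0,\sqrt{s^2-s-4})$, the equation $4k^2+4ks+s+4=Q$ is solved by $k=(-s\pm\delta)/2$, which give $(k-1)^2=(s+2\mp\delta)^2/4$; the smaller choice $(s+2-\delta)^2/4$ is the worst case. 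Writing $X:=-Q\in(0,\,s^2-s-4]$ (so $\delta^2=s^2-s-4-X$ and $(Q-s)^2=(X+s)^2$), the lemma reduces to
\[
(3s-10)(s+2-\delta)^2(X+s)^2>20X^3.
\]

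To establish this reduced inequality I would substitute $X=s^2-s-4-\delta^2$ to eliminate the square root, yielding a polynomial inequality $F(s,\delta)>0$ on $\{s\ge 6,\ 0\le\delta^2<s^2-s-4\}$. Following the scheme used to prove Lemma~\ref{ineqef}, I would view $F$ as a polynomial in $\delta$ with parameter $s$, compute $\mathrm{disc}_\delta F$ (a polynomial in $s$), and verify it is nonvanishing on $s\ge 6$ either by explicit factoring or by an iterated discriminant-plus-continuity argument in $s$ itself. Combined with an explicit positivity check at one convenient sample, for instance $(s,\delta)=(6,0)$, where the reduced LHS equals $8\cdot 64\cdot 32^2=524288$ while the RHS equals $20\cdot 26^3=351520$, the continuity of $\Psi(s):=\min_\delta F(s,\delta)$ together with $\mathrm{disc}_\delta F\ne 0$ will force $\Psi(s)>0$ for all $s\ge 6$.

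The main obstacle is sharpness. At $s=6$ the ratio of the two sides of the reduced inequality dips to roughly $1.04$ in an intermediate range (around $X\approx 22$, i.e.\ $\delta\approx 2$), so any proof using only crude monotonicity bounds such as $s+2-\delta\ge 2$ or $X\le s^2-s-4$ in isolation is doomed. This is exactly the regime where the Sylvester-type discriminant-continuity machinery pays off, since it bypasses the need for a single uniform pointwise estimate and instead rules out the existence of a zero of $F$ by a global argument on critical points.
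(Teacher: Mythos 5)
Your overall strategy is the same one the paper uses: regard the expression as a one-parameter family of one-variable polynomials, show the discriminant in that variable is nonvanishing on the parameter range, deduce that the minimum over the variable never changes sign, and anchor the sign at one parameter value. However, as written your argument has two genuine gaps, and one avoidable complication.

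First, the avoidable complication. The paper works with $P(r,k)$ directly as a degree-$6$ polynomial in $k$ with positive leading coefficient $48r+448$, so $\inf_k P(r,k)$ is attained on $\mathbb{R}$ and the discriminant argument applies cleanly. Your case split on the sign of $Q=4k^2+4ks+s+4$ and the substitution $\delta=|s+2k|$ (note $s^2-s-4+Q=(s+2k)^2$, so $\delta$ is nothing more than $|s+2k|$) restrict you to the bounded interval $\delta\in[0,\sqrt{s^2-s-4})$, and the discriminant-in-$\delta$ argument only controls critical points over all of $\mathbb{R}$; you would have to additionally argue that the boundary of the interval cannot produce a zero, or prove the stronger statement $F(s,\delta)>0$ for all $\delta\in\mathbb{R}$. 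Either way, nothing is gained over the paper's direct approach in $k$.

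Second and more seriously, the anchor step is wrong as stated. Checking $F(6,0)>0$ gives only the upper bound $\Psi(6)\le F(6,0)$ on $\Psi(6):=\min_\delta F(6,\delta)$; it does not show $\Psi(6)>0$. You yourself observe that the actual minimizer at $s=6$ sits near $\delta\approx 2$, not at $\delta=0$, so the value at $(6,0)$ is irrelevant to the sign of $\Psi(6)$. The paper handles this correctly with a nested discriminant argument: after bounding $P(0,k)>32\,Q(k)$ for an explicit integer-coefficient sextic $Q$, it shows $\operatorname{disc}_k(Q(k)+l)\ne 0$ for all $l\ge 0$, which forces $\inf_k Q(k)>0$ and hence $\Psi(0)>0$. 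Some replacement for that step is required in your outline.

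Third, the computations that are the actual content of the lemma are only gestured at. You say you ``would compute $\operatorname{disc}_\delta F$ ... and verify it is nonvanishing on $s\ge 6$ either by explicit factoring or by an iterated discriminant-plus-continuity argument,'' but you do not carry this out. The paper's proof consists precisely of exhibiting these discriminants (one factors as $-8388608000(r+6)^7(3r+8)^4(5r+38)^3$ times a positive-coefficient polynomial in $r$, the other as $-8$ times a positive-coefficient polynomial in $l$) and reading off their signs. Without those computations, or something equivalent, the argument is not a proof.
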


\begin{proof}
    Putting $s = r + 6$, we have
    \begin{eqnarray*}
  &  & 16 (3 s - 10) (k - 1)^2 (k^2 + ks + 1)^2 + 5 (4 k^2 + 4 ks + s +
  4)^3\\
  & = & 16 (3 r + 8)  (k - 1)^2 [k^2 + k (r + 6) + 1]^2 + 5 [4 k^2 + 4 k (r +
  6) + r + 10]^3 .
    \end{eqnarray*}
    Set
    \[ P (r, k) = 16 (3 r + 8)  (k - 1)^2 [k^2 + k (r + 6) + 1]^2 + 5 [4 k^2 + 4 k
   (r + 6) + r + 10]^3 . \]
    We need to prove $P (r, k) > 0$ for all $r \geq 0$ and $k \in
\mathbbm{R}$. Computing the discriminant of $P$ with respect to $k$, we get
    \begin{eqnarray*}
        \tmop{disc}_k (P (r, k)) & = & - 8388608000 (r + 6)^7  (3 r + 8)^4  (5 r + 38)^3
        \times\\
        &  & (432 \,r^{10} + 85536 \,r^9 + 3803796 \,r^8 \\
        &  & + 82050188 \,r^7 + 1045887247 \,r^6\\
        &  & + 8514043782 \,r^5 + 45438798848 \,r^4\\
        &  & + 157585300528 \,r^3 + 338704428144 \,r^2\\
        &  &  + 402431922656 \,r + 195043474048)\\
        & < & 0.
    \end{eqnarray*}
    Set $\Psi (r) = \inf_{k \in \mathbb{R}} P (r, k)$. For any $r \geq 0$, from $\tmop{disc}_k (P
    (r, k))\neq 0$ we get $\Psi (r) \neq 0$.

    Now we estimate $\Psi (0)$. We have
    \begin{eqnarray*}
        P (0, k) & = & 32 \times [14 k^6 + 220 k^5 + 1215 k^4 + 2852 k^3
        \\
        &  & \left. + \left( 2947 \!+\! \tfrac{1}{2} \right) k^2 + 1165 k +
        160 \!+\! \tfrac{1}{4} \right] .
    \end{eqnarray*}
    Let
    \[ Q (k) = 14 k^6 + 220 k^5 + 1215 k^4 + 2852 k^3 + 2947 k^2 + 1165 k +
    160. \]
    Then $P (0, k) > 32 Q (k)$.

    Let $l$ be a nonnegative number. A computation gives
    \begin{eqnarray*}
  &&\tmop{disc}_k (Q (k) + l) \\& = & - 8 \times (3136589568 \,l^5 + 11043385174784
  \,l^4 \\
  &  & + 1758965584701728 \,l^3 + 79189061386916048 \,l^2\\
  &  &  + 1067453304129927340 \,l + 4262062225186419475)\\
  & < & 0.
    \end{eqnarray*}
    Hence, we get $\inf_{k \in
\mathbb{R}} (Q (k) + l) \neq 0$ for all $l \geq 0$. It follows that
$\inf_{k \in \mathbb{R}} Q (k) > 0$.
    Thus we have $\Psi (0) \geq 32 \inf_{k \in \mathbb{R}}Q (k) > 0$.
        Since $\Psi (r)$ is continuous, we obtain $\Psi (r) > 0$ for $r \geq 0$.
\end{proof}

\begin{lemma}
  \label{AlIneq1}Let $s$ be a positive number satisfying $s \geq 6$, let
  $D_s = \{ (x, y) ; x^2 + y^2 \leq s \}$. For any point $(x, y) \in D_s$, we
  have
  \[ - (x^2 + 4 x y + 4)^3 < \frac{16}{5}  \left( 3 - \frac{10}{s} \right) (x
     - y)^2 (1 + x y)^2 . \]
\end{lemma}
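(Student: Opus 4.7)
The plan is to reduce the inequality to Lemma \ref{ineqks} via the substitution $k = y/x$. Writing
\[
\Phi(s; x, y) := 5(x^2+4xy+4)^3 + 16\bigl(3 - \tfrac{10}{s}\bigr)(x-y)^2(1+xy)^2,
\]
we have $\partial_s \Phi = \frac{160}{s^2}(x-y)^2(1+xy)^2 \geq 0$, so $\Phi$ is non-decreasing in $s$, and it suffices to prove $\Phi > 0$ at the minimal admissible parameter $s_* = \max\{6,\, x^2+y^2\}$. For $x \neq 0$, set $k = y/x$ and $\tau = x^2+y^2$; using $x^2 = \tau/(1+k^2)$ and the identities $1+xy = (k^2+k\tau+1)/(1+k^2)$ and $x^2+4xy+4 = (4k^2+4k\tau+\tau+4)/(1+k^2)$, a direct computation yields
\[
(1+k^2)^3\,\Phi(s; x, y) = 5(4k^2+4k\tau+\tau+4)^3 + 16\bigl(3-\tfrac{10}{s}\bigr)\,\tau\,(k-1)^2(k^2+k\tau+1)^2.
\]
The boundary case $x = 0$ is immediate since $\Phi(s; 0, y) = 320 + 16(3-10/s)\,y^2 > 0$ for $s \geq 6$.

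When $\tau \geq 6$, taking $s = \tau$ in the displayed identity gives $16(3-10/\tau)\,\tau = 16(3\tau-10)$, transforming the right-hand side into exactly the expression of Lemma \ref{ineqks} at parameters $(k, \tau)$. Hence $\Phi(\tau; x, y) > 0$, and the monotonicity of $\Phi$ in $s$ extends this to $\Phi(s; x, y) > 0$ for every admissible $s \geq \tau$.

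When $\tau < 6$, the minimal admissible parameter is $s_* = 6$, and monotonicity reduces the problem to verifying $\Phi(6; x, y) > 0$. Substituting $s = 6$ and multiplying by $3$, this is equivalent to
\[
15(4k^2+4k\tau+\tau+4)^3 + 64\tau(k-1)^2(k^2+k\tau+1)^2 > 0 \qquad \text{for } 0 \leq \tau < 6,\ k \in \mathbb{R}.
\]
This does not follow from Lemma \ref{ineqks}, whose coefficient $16(3\tau-10)$ is even negative for $\tau < 10/3$. Establishing this polynomial positivity is the main obstacle, and I expect it to be handled by a Sylvester-discriminant argument parallel to Lemmas \ref{ineqef} and \ref{ineqks}: treat the left-hand side as a polynomial in $k$ with $\tau$ as parameter, compute $\operatorname{disc}_k$ and show it has constant sign throughout $[0, 6)$, and then verify strict positivity at one convenient value of $k$ (say $k = 1$, where the expression reduces to $15(5\tau+8)^3 > 0$) to conclude via continuity.
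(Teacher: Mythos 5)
Your reduction of the case $x^2+y^2\ge 6$ to Lemma~\ref{ineqks} via the monotonicity of $\Phi$ in $s$ and the substitution $k=y/x$, $\tau=x^2+y^2$ is correct and is the same algebraic reduction the paper uses for the boundary circle. But the route you take to get there is genuinely different: the paper never considers the case $x^2+y^2<6$ separately. Instead it sets $\varphi=-\frac{(x^2+4xy+4)^3}{(x-y)^2(1+xy)^2}$, computes the two partial derivatives, and shows via a resultant computation that the critical-point system has no real solutions; hence the maximum of $\varphi$ on $D_s$ must occur on $\partial D_s$, where $x^2+y^2=s\ge 6$, and Lemma~\ref{ineqks} applies directly. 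In other words, the paper's ``no interior critical points'' step is precisely what makes the sublevel $\tau<6$ disappear from the problem.

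Because you do not have that step, your argument has a genuine gap. For $(x,y)$ with $\tau=x^2+y^2<6$ your monotonicity lemma only reduces the claim to $\Phi(6;x,y)>0$, i.e.\ to the new polynomial inequality
\[
15\,(4k^2+4k\tau+\tau+4)^3 + 64\,\tau\,(k-1)^2(k^2+k\tau+1)^2 > 0
\qquad (0\le\tau<6,\ k\in\mathbb{R}),
\]
which, as you correctly note, is \emph{not} an instance of Lemma~\ref{ineqks} (whose coefficient $16(3\tau-10)$ is even negative on part of this range). You leave this as an expectation: the discriminant has not been computed, its sign has not been checked on $[0,6)$, and the ``check at $k=1$'' you propose does not close the argument even if the discriminant step were done---to conclude by continuity you must establish that $\min_k$ of the left-hand side is positive at a single value of $\tau$ (e.g.\ $\tau=0$, where the minimum over $k$ is $15\cdot 64=960$), not merely that a single slice $k=1$ is positive. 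So the proof is incomplete: either you must carry out and verify the discriminant computation for this new polynomial, or adopt the paper's interior-critical-point argument to eliminate the $\tau<6$ region altogether.
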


\begin{proof}
  If $(x - y) (1 + x y) = 0$ or $x^2 + 4 x y + 4 = 0$, the assertion is
  obviously true.

  Suppose that $(x - y) (1 + x y) \neq 0$ and $x^2 + 4 x y + 4 \neq 0$. Set
  \[ \varphi = - \frac{(x^2 + 4 x y + 4)^3}{(x - y)^2 (1 + x y)^2} . \]
  Now we find the critical points of $\varphi$. The partial derivatives of $\varphi$
  are given by
  \[ \frac{\partial \varphi}{\partial x} = - \frac{2 (x^2 + 4 xy + 4)^2  (x^3
     y - 4 x^2 y^2 + 2 x^2 - 2 xy^3 - 9 xy - 2 y^2 - 4)}{(x - y)^3  (xy +
     1)^3}, \]
  \[ \frac{\partial \varphi}{\partial y} = \frac{2 (x^2 + 4 xy + 4)^2  (x^4 -
     4 x^3 y - 2 x^2 y^2 - 3 x^2 - 6 xy - 4)}{(x - y)^3  (xy + 1)^3} . \]
  We investigate the simultaneous equations
  \begin{equation}
    \left\{ \begin{array}{rll}
      x^3 y - 4 x^2 y^2 + 2 x^2 - 2 xy^3 - 9 xy - 2 y^2 - 4 & = & 0,\\
      x^4 - 4 x^3 y - 2 x^2 y^2 - 3 x^2 - 6 xy - 4 & = & 0.
    \end{array} \right. \label{eqs}
  \end{equation}
  Computing the resultant with respect to $x$, we get
  \begin{eqnarray*}
    &  & \tmop{res}_x (x^3 y - 4 x^2 y^2 + 2 x^2 - 2 xy^3 - 9 xy - 2 y^2 - 4,
    \\
    &  & \qquad  x^4 - 4 x^3 y - 2 x^2 y^2 - 3 x^2 - 6 xy - 4)\\
    & = & 720 y^4 + 1296 y^2 + 576.
  \end{eqnarray*}
  Since the resultant can not be zero for real $y$, the equation system
  (\ref{eqs}) has no real solutions. Hence, the maximum of $\varphi$ on $D_s$
  is achieved on the boundary $\partial D_s$.

  Now we estimate the maximum of $\varphi$ on $\partial D_s = \{ (x, y) : x^2
  + y^2 = s \}$. If $x = 0$, the assertion is obviously true. Assume $x \neq 0$.
  Let $y = k x$. Then $x^2 = \frac{s}{1 + k^2}$. Thus we obtain
  \[ \varphi = - \frac{(4 k^2 + 4 ks + s + 4)^3}{s (k - 1)^2 (k^2 + ks + 1)^2}
     . \]
  Using Lemma \ref{ineqks}, we have
  \begin{eqnarray*}
    &  & \varphi - \frac{16}{5}  \left( 3 - \frac{10}{s} \right)\\
    & = & - \frac{1}{5 s (k - 1)^2 (k^2 + ks + 1)^2} \times\\
    &  & [16 (3 s - 10) (k - 1)^2 (k^2 + ks + 1)^2 + 5 (4 k^2 + 4 ks + s +
    4)^3]\\
    & < & 0.
  \end{eqnarray*}
\end{proof}

With the aid of the above lemmas, we can now prove Theorem
\ref{3Bless}.
\begin{proof}[Proof of Theorem \ref{3Bless}.]
  From the definitions of $A$ and $B$, we have
  \begin{eqnarray*}
    3 (A - 2 B) & = & \sum_{i, j, k \tmop{distinct}} h_{i j k}^2 (\lambda_i^2
    + \lambda_j^2 + \lambda_k^2 - 2 \lambda_i \lambda_j - 2 \lambda_j
    \lambda_k - 2 \lambda_i \lambda_k)\\
    &  & + 3 \sum_{i, j \tmop{distinct}} h_{i i j}^2 (\lambda_j^2 - 4
    \lambda_i \lambda_j) - 3 \sum_i h_{i i i}^2 \lambda_i^2 .
  \end{eqnarray*}
  By the definition of $F$, for distinct $i, j, k$, we get
  \begin{eqnarray*}
    F & \geq & 2 (\lambda_i - \lambda_j)^2 (1 + \lambda_i \lambda_j)^2 +
    2 (\lambda_j - \lambda_k)^2 (1 + \lambda_j \lambda_k)^2 + 2 (\lambda_i -
    \lambda_k)^2 (1 + \lambda_i \lambda_k)^2\\
    & \geq & 2 (\lambda_i - \lambda_j)^2 (1 + \lambda_i \lambda_j)^2 .
  \end{eqnarray*}
  Applying Lemma \ref{AlIneq2}, we obtain
  \begin{eqnarray*}
    &  & - 2 \lambda_i \lambda_j - 2 \lambda_j \lambda_k - 2 \lambda_i
    \lambda_k - 4\\
    & < & [4 (\lambda_i - \lambda_j)^2 (1 + \lambda_i \lambda_j)^2 + 4
    (\lambda_j - \lambda_k)^2 (1 + \lambda_j \lambda_k)^2 + 4 (\lambda_i -
    \lambda_k)^2 (1 + \lambda_i \lambda_k)^2]^{\frac{1}{3}}\\
    & \leq & (2 F)^{\frac{1}{3}} .
  \end{eqnarray*}
  Noting that $c > \frac{24}{5} - \frac{16}{6} > 2$, we get
  \[ \lambda_i^2 + \lambda_j^2 + \lambda_k^2 - 2 \lambda_i \lambda_j - 2
     \lambda_j \lambda_k - 2 \lambda_i \lambda_k \leq S + 4 + \sqrt[3]{c F} . \]
  By Lemma \ref{AlIneq1}, we have
  \begin{eqnarray*}
    \lambda_j^2 - 4 \lambda_i \lambda_j & = & \lambda_i^2 + \lambda_j^2 + 4 -
    (4 \lambda_i \lambda_j + \lambda_i^2 + 4)\\
    & < & S + 4 + \left[ \frac{16}{5}  \left( 3 - \frac{10}{S} \right) (\lambda_i - \lambda_j)^2
    (1 + \lambda_i \lambda_j)^2 \right]^{\frac{1}{3}}\\
    & \leq & S + 4 + \left[ \left( \frac{24}{5} - \frac{16}{S} \right) F
    \right]^{\frac{1}{3}}\\
    & \leq & S + 4 + \sqrt[3]{c F} .
  \end{eqnarray*}
  Therefore, we obtain
  \begin{eqnarray*}
    3 (A - 2 B) & \leq & \left( S + 4 + \sqrt[3]{c F} \right) \left(
    \sum_{i, j, k \tmop{distinct}} h_{i j k}^2 + 3 \sum_{i, j \tmop{distinct}}
    h_{i i j}^2 \right)\\
    & \leq & \left( S + 4 + \sqrt[3]{c F} \right) | \nabla h |^2 .
  \end{eqnarray*}
\end{proof}

\begin{cor}
\label{BB} Let $n \geq 6$. If $n \leq S \leq 2n$,
then
  \[ 3 (A - 2 B) \leq \left( S + 4 + \sqrt[3]{c F} \right) | \nabla h
     |^2, \]
  where $c = \frac{24}{5} - \frac{8}{n}$.
\end{cor}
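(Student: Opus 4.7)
The plan is to obtain Corollary \ref{BB} as an immediate specialization of Theorem \ref{3Bless}. Theorem \ref{3Bless} is stated with a free parameter $\eta > 0$ that controls the pinching window $n \leq S \leq (1+\eta^{-1})n$, and the resulting constant in the cube-root term is $c = \frac{24}{5} - \frac{16}{(1+\eta^{-1})n}$. Choosing $\eta = 1$ converts the window into exactly $n \leq S \leq 2n$, which is the hypothesis of the corollary.

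With this choice, the constant becomes
\[
c \;=\; \frac{24}{5} - \frac{16}{(1+1)n} \;=\; \frac{24}{5} - \frac{8}{n},
\]
matching the value claimed in the corollary. Substituting into the conclusion of Theorem \ref{3Bless} produces the inequality $3(A-2B) \leq (S+4+\sqrt[3]{cF})|\nabla h|^2$ with the specified $c$, so the corollary follows in one line.

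There is essentially no obstacle here, since all the analytic and algebraic work (the Sylvester-based Lemmas \ref{ineqef}--\ref{AlIneq1} and the assembly in the proof of Theorem \ref{3Bless}) has already been carried out. The only sanity check worth recording is that $c$ remains in the regime where the proof of Theorem \ref{3Bless} applies: under $n \geq 6$ one has $c = \frac{24}{5} - \frac{8}{n} \geq \frac{24}{5} - \frac{8}{6} > 2$, which is exactly the positivity used in the chain of estimates for $\lambda_i^2+\lambda_j^2+\lambda_k^2 - 2\lambda_i\lambda_j - 2\lambda_j\lambda_k - 2\lambda_i\lambda_k$ and for $\lambda_j^2 - 4\lambda_i\lambda_j$ in the proof of Theorem \ref{3Bless}. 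Hence the corollary is a clean corollary rather than a separate argument.
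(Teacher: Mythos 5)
Your proof is correct and is exactly how the corollary is meant to follow from Theorem~\ref{3Bless}: set $\eta = 1$, so the pinching window becomes $n \leq S \leq 2n$ and $c = \frac{24}{5} - \frac{16}{2n} = \frac{24}{5} - \frac{8}{n}$. The paper treats this as an immediate specialization and gives no separate argument, so your one-line deduction (together with the sanity check that $c > 2$ for $n \geq 6$) matches the intended proof.
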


\begin{rmk}
It seems that the crucial point to solve the second pinching problem
on the Chern conjecture is to find the best constant $c$ in
Corollary \ref{BB}.
\end{rmk}

\section{Proof of the main theorem}

Now we are in a position to prove our rigidity theorem on the Chern
Conjecture for  minimal hypersurfaces in a sphere.

\begin{proof}[Proof of Main Theorem]
When $n = 3$, Peng-Terng \cite{PT2} proved that if $0\leq
S-3\leq\delta(3)$, then $S\equiv3$, where
$\delta(3)=\frac{2.61}{5+\sqrt{17}}>\frac{1}{6}$. When $n = 4$,
Cheng-Ishikawa \cite{ChengIshi} verified that if $0\leq
S-4\leq\delta(4)$, then $S\equiv4$, where
$\delta(4)=\frac{8}{31}>\frac{2}{9}$. When $n = 5$, Zhang
\cite{Zhang1} showed that if $0\leq S-5\leq\delta(5)$, then $S\equiv
5$, where $\delta(5)=0.444>\frac{5}{18}$.

When $n \geq 6$, we assume that $M$ is a compact minimal
hypersurface in $\mathbb{S}^{n+1}$ whose squared length of the
second fundamental form satisfies  $n \leq S \leq (1 +
\eta^{- 1}) n$, where $\eta$ is a positive parameter.

Combining (\ref{ddh2}), (\ref{int2B}) and (\ref{ddh2greaG}), we have
\begin{eqnarray}
  \int_M (A - 2 B)d M
  & = & \nonumber\int_M \left[ \frac{1}{2} F + | \nabla h |^2 - \frac{1}{4} | \nabla S
  |^2 \right] d M\\
  & \leq &\nonumber \int_M \left[ \frac{2}{3} | \nabla^2 h |^2 - \frac{S (S -
  n)^2}{n + 4} + | \nabla h |^2 - \frac{1}{4} | \nabla S |^2 \right]d M
  \label{intBeq}\\
  & = & \int_M \bigg[ - \bigg( \frac{4}{3} n + 1 - \frac{2}{3} S \bigg) |
  \nabla h |^2 + 2 (A - 2 B)\\
  & &\nonumber - \frac{S (S - n)^2}{n + 4} + \frac{3}{4} |
  \nabla S |^2 \bigg] d M.
\end{eqnarray}
By (\ref{intdS2}), we get
\begin{equation}
   \label{dS2SS-2n}
   \int_M \left[ \frac{3}{4} | \nabla S |^2 - \frac{S (S - n)^2}{n + 4}
  \right] d M
   =  \int_M \left[ \frac{3}{2} (n - S) | \nabla h |^2 + b S (S - n)^2
  \right] d M ,
\end{equation}
where $b = \frac{3}{2} - \frac{1}{n + 4}$. Since $S \leq (1 +
\eta^{- 1}) n$, we have
\begin{equation}
   \label{dS2SS-3n} \int_M   b S (S - n)^2
   d M \leq  \int_M   \frac{b
  n}{\eta} S (S - n)  d M =\int_M  \frac{b n}{\eta}  | \nabla
  h |^2 d M.\end{equation}
Substituting (\ref{dS2SS-3n}) into (\ref{dS2SS-2n}), we get
\begin{equation}
   \label{dS2SS-n}
    \int_M \left[ \frac{3}{4} | \nabla S |^2 - \frac{S (S - n)^2}{n + 4}
  \right] d M
   \leq  \int_M \left[ \frac{3}{2} (n - S) + \frac{b n}{\eta} \right] | \nabla
  h |^2 d M.
\end{equation}
From (\ref{intBeq}) and (\ref{dS2SS-n}), we obtain
\begin{equation}
  \int_M (A - 2 B) d M\geq \int_M \left( 1 - \frac{n}{6} + \frac{5}{6} S -
  \frac{b n}{\eta} \right) | \nabla h |^2 d M. \label{intBgre}
\end{equation}

Let $\sigma$ be a positive parameter. Using Theorem \ref{3Bless} and
Young's inequality, we get
\begin{eqnarray}
  3 (A - 2 B) & \leq & \left( S + 4 + \sqrt[3]{c F} \right) | \nabla h
  |^2  \label{3Bsigma}\\
  & \leq & (S + 4) | \nabla h |^2 + \frac{1}{3} c \sigma^2 F +
  \frac{2}{3 \sigma} | \nabla h |^3 . \nonumber
\end{eqnarray}
Let $\varepsilon$ and $\kappa$ be positive parameters. We have the
following estimates.
\begin{eqnarray}
  \int_M | \nabla h |^3 d M& = & \int_M \left[ S (S - n) | \nabla h | +
  \frac{1}{2} | \nabla h | \Delta S \right] d M \nonumber\\
  & = & \int_M \left[ S (S - n) | \nabla h | - \frac{1}{2} \langle \nabla |
  \nabla h |, \nabla S \rangle \right] d M \label{dh3}\\
  & \leq & \int_M \left[ S (S - n) | \nabla h | + \varepsilon | \nabla^2
  h |^2 + \frac{1}{16 \varepsilon} | \nabla S |^2 \right] d M \nonumber
\end{eqnarray}
and
\begin{eqnarray}
  &  & \int_M S (S - n) | \nabla h | d M \nonumber\\
  & \leq & \int_M \left[ 2 (1 + \eta^{- 1}) n \kappa S (S - n) +
  \frac{1}{8 (1 + \eta^{- 1}) n \kappa} S (S - n) | \nabla h |^2 \right]d M
  \nonumber\\
  & = & \int_M \left[ 2 (1 + \eta^{- 1}) n \kappa + \frac{1}{8 (1 + \eta^{-
  1}) n \kappa} S (S - n) \right] | \nabla h |^2 d M \label{dh1}\\
  & \leq & \int_M \left[ 2 (1 + \eta^{- 1}) n \kappa + \frac{1}{8
  \kappa} (S - n) \right] | \nabla h |^2 d M. \nonumber
\end{eqnarray}
Combining (\ref{3Bsigma}), (\ref{dh3}) and (\ref{dh1}), we get
\begin{eqnarray*}
  &  & 3 \int_M (A - 2 B)d M\\
  & \leq & \int_M \left\{ (S + 4) | \nabla h |^2 + \frac{1}{3} c \sigma^2
  F \right.\\
  &  & \left. + \frac{2}{3 \sigma} \left[ \left(2 (1 + \eta^{- 1}) n \kappa +
  \frac{1}{8 \kappa} (S - n)\right) | \nabla h |^2 + \varepsilon | \nabla^2 h |^2 +
  \frac{1}{16 \varepsilon} | \nabla S |^2 \right] \right\} d M.
\end{eqnarray*}
This together with (\ref{ddh2}) and (\ref{int2B}) implies
\begin{eqnarray*}
  &  & 3 \int_M (A - 2 B)d M\\
  & \leq & \int_M \left\{ (S + 4) | \nabla h |^2 + \frac{1}{3} c \sigma^2
  \left( 2 (A - 2 B) - 2 | \nabla h |^2 + \frac{1}{2} | \nabla S |^2 \right)
  \right.\\
  &  & + \frac{2}{3 \sigma} \left[ \left(2 (1 + \eta^{- 1}) n \kappa + \frac{1}{8
  \kappa} (S - n)\right) | \nabla h |^2 + \frac{1}{16 \varepsilon} | \nabla S |^2
  \right.\\
  &  & \left. \left. + \varepsilon \left(- (2 n + 3 - S) | \nabla h |^2 + 3 (A - 2
  B) + \frac{3}{2} | \nabla S |^2 \right) \right] \right\} d M.
\end{eqnarray*}
This implies
\begin{eqnarray}
  &  & \int_M [\theta (A - 2 B) - \tau | \nabla S |^2] d M \nonumber\\
  & \leq & \int_M \left[ S + 4 - \frac{2}{3} c \sigma^2 + \frac{2}{3
  \sigma} \times \right.  \label{thetaB}\\
  &  & \left. \left( 2 (1 + \eta^{- 1}) n \kappa + \frac{1}{8 \kappa} (S - n)
  - \varepsilon (2 n + 3 - S) \right) \right] | \nabla h |^2 d M, \nonumber
\end{eqnarray}
where
\[ \theta = 3 - \frac{2}{3} c \sigma^2 - 2 \sigma^{- 1} \varepsilon, \qquad
   \tau = \frac{1}{6} c \sigma^2 + \frac{2}{3 \sigma} \left( \frac{1}{16
   \varepsilon} + \frac{3 \varepsilon}{2} \right) . \]
We restrict $\sigma$ and $\varepsilon$ such that $\theta \geq 0$.

By (\ref{intdS2}), we get
\begin{eqnarray}
\int_M | \nabla S |^2 d M & \leq & 2 \int_M \left[ (n - S) |
\nabla h |^2 +
\frac{n}{\eta} S (S - n) \right] d M \nonumber\\
& = & 2 \int_M \left( n - S + \frac{n}{\eta} \right) | \nabla h |^2
d M. \label{dS2less}
\end{eqnarray}
Combining (\ref{intBgre}), (\ref{thetaB}) and (\ref{dS2less}), we
obtain
\begin{eqnarray}\label{ineq3p}
0 & \leq & \int_M \left[ S + 4 - \frac{2}{3} c \sigma^2 + \frac{2}{3
    \sigma} \times \right. \nonumber\\
&  & \left( 2 (1 + \eta^{- 1}) n \kappa + \frac{1}{8 \kappa} (S - n) -
\varepsilon (2 n + 3 - S) \right) \nonumber\\
&  & \left. + 2 \tau \left( n - S + \frac{n}{\eta} \right) - \theta \left(
1 - \frac{n}{6} + \frac{5}{6} S - \frac{b n}{\eta} \right) \right] | \nabla
h |^2 d M \label{intineq}\\
& = & \int_M \left[ (S - n) \left( 1 + \frac{1}{12 \sigma \kappa} + \frac{2
    \varepsilon}{3 \sigma} - 2 \tau - \frac{5}{6} \theta \right) \right.
\nonumber\\
&  & + n + 4 - \frac{2}{3} c \sigma^2 + \frac{2}{3 \sigma} \Big(2 (1 + \eta^{-
    1}) n \kappa - \varepsilon ( n + 3 )\Big) \nonumber\\
&  & \left. + \frac{2 \tau n}{\eta} - \theta \left( 1 + \frac{2
n}{3} - \frac{b n}{\eta} \right) \right] | \nabla h |^2 d M.
\nonumber
\end{eqnarray}
Taking $\varepsilon = \frac{1}{18}, \sigma = \frac{7}{18}, \kappa =
\frac{1}{24}, \eta = 18$, we get
\[\theta = \frac{784}{513 n}+\frac{6323}{2835} > 0.\]
It follows from (\ref{intineq}) that
\[ 0 \leq \int_M \left[ - (S - n) \left( \frac{784}{1539 n}+\frac{13}{2430}
    \right) - \frac{3629 n^2+126690 n-347760}{1939140
   (n + 4)} \right] | \nabla h |^2 d M. \]
Since $n \geq 6$, the expression in the square bracket of the
above formula is bounded from above by a negative constant. This
implies $| \nabla h | \equiv 0$. Therefore, $S \equiv n$.
\end{proof}

\begin{rmk}
    In fact, we can enlarge the second pinching interval in Main Theorem to $[n, n + \frac{n}{17.93}]$ by taking $\eta = 17.93$.
\end{rmk}

Inspired by Tang-Yan's beautiful work on the famous Yau conjecture
for the first eigenvalue \cite{TY,Y2}, we have the following open
problem.
\begin{problem}\label{prob1} Let $M$ be an $n$-dimensional compact minimal hypersurface in
$\mathbb{S}^{n+1}$. Denote by $\lambda_1(M)$ the first eigenvalue of the Laplace operator acting on functions over $M$.\\
(i) Is it possible to prove that if $M$ has constant scalar curvature, then $\lambda_1(M)=n$?\\
(ii) Set $a_k=(k-sgn(5-k))n$. Is it possible to prove that if
$a_k\leq S\leq a_{k+1}$ for some $k\in \{m\in\mathbb{Z}^+; 2\leq
m\leq 4\,\}$, or $S\geq a_{5}$, then $\lambda_1(M)=n$?
\end{problem}

It is well known that the possible values of the squared length of
the second fundamental forms of all closed isoparametric
hypersurfaces with constant mean curvature $H$ in the unit sphere
form a discrete set $I(\subset\mathbb{R})$, which was explicitly
given by Muto \cite{Mut}. The following problem can be viewed as a
general version of the Chern conjecture.
\begin{problem}\label{prob2} Let $M$ be a closed hypersurface with
constant mean curvature $H$
in the unit sphere $\mathbb{S}^{n+1}$. \\
(i) Assume that $a\leq S \leq b$, where $a<b$ and $[a,b]\bigcap
I=\{a,b\}$. Is it possible to prove that $S\equiv a$ or $S\equiv b,$
and $M$ is an isoparametric
hypersurface in $\mathbb{S}^{n+1}$?\\
(ii) Suppose that $S\geq c,$ where $c=\sup_{t\in I} t.$ Can one show
that $S\equiv c$, and $M$ is an isoparametric hypersurface in
$\mathbb{S}^{n+1}$?
\end{problem}

Some progresses on the generalized Chern conjecture for
hypersurfaces with constant mean curvature have been made by several
authors \cite{AB,C2,ChengWan,LXX,XT,XX1,XX2}, ect.

\end{document}